\newtheorem{theorem}{Theorem}
\newtheorem{lemma}[theorem]{Lemma}
\newtheorem{corollary}[theorem]{Corollary}
\theoremstyle{definition}
\newtheorem{remark}[theorem]{Remark}
\newtheorem{example}[theorem]{Example}
\numberwithin{equation}{section}
\numberwithin{theorem}{section}
\renewcommand{\le}{\leqslant}
\renewcommand{\ge}{\geqslant}
\newcommand{\lin}{\mathrm{Lin}\,}
\setlist[enumerate,1]{label=(\roman*)}
\begin{document}
	\title{Unitarily equivalent bilateral weighted shifts with operator weights}
	\author{Michał Buchała}
	\begin{abstract}
		We study unitarily equivalent bilateral weighted shifts with operator weights. We establish a general characterization of unitary equivalence of such shifts under the assumption that the weights are quasi-invertible. We prove that under certain assumptions unitary equivalence of bilateral weighted shifts with operator weights defined on $ \mathbb{C}^{2} $ can always be given by a unitary operator with at most two non-zero diagonals. We provide examples of unitarily equivalent shifts with weights defined on $ \mathbb{C}^{k} $ such that every unitary operator, which intertwines them has at least $ k $ non-zero diagonals.
	\end{abstract}
	\keywords{weighted shifts, operator weights, unitary equivalence.}
	\subjclass{47B37, 47B02.}

	\maketitle


	
		
		
	\section{Introduction}
Classical weighted shifts (both unilateral and bilateral) have been studied extensively for many years (see \cite{Shi1} for comprehensive work on weighted shifts). One possible generalization of classical weighted shifts is to replace scalar weights with operator weights. We consider unilateral weighted shifts:
\begin{equation*}
    \ell^{2}(\mathbb{N},H)\ni (x_{i})_{i\in \mathbb{Z}}\mapsto (0,S_{1}x_{0},S_{2}x_{1},\ldots)\in \ell^{2}(\mathbb{N},H)
\end{equation*}
and bilateral weighted shifts:
\begin{equation*}
    \ell^{2}(\mathbb{Z},H)\ni (x_{i})_{i\in \mathbb{Z}}\mapsto (S_{i}x_{i-1})_{i\in \mathbb{Z}} \in \ell^{2}(\mathbb{Z},H), 
\end{equation*}
where all $ S_{i} $'s are bounded operators on a Hilbert space $ H $. The matrix representation of bilateral weighted shift takes the form
\begin{align}
    \label{FormMatrixOfOperatorShift}
    \begin{bmatrix}
        \ddots &  \ddots & \ddots & \ddots & \ddots\\
        \ddots & 0 & 0 & 0 & \ddots\\
        \ddots &  S_{0} & \boxed{0} & 0 & \ddots\\
        \ddots & 0 & S_{1} & 0 & \ddots\\
        \ddots & \ddots & \ddots & \ddots & \ddots
    \end{bmatrix},
\end{align}
where $ \boxed{\,\cdot\,} $ denotes the entry in zeroth column and zeroth row of this matrix.
In \cite{Lam3} Lambert studied unitary equivalence of unilateral weighted shifts with invertible weights. In \cite{Oro1} Orov\v canec characterized unitarily equivalent unilateral weighted shifts with quasi-invertible weights (see also \cite{AnaChaJabSto1} for the similar result for shifts with weights having dense ranges). In \cite{IvaOro1} the authors studied similar and quasi-similar unilateral weighted shifts and in \cite{Iva1} the similarity and quasi-similarity relation for bilateral weighted shifts was investigated. In \cite{Guy1} there was shown that if the weights can be divided into two sequences of normal and commuting operators, then the bilateral shift with such weights is unitarily equivalent to the shift with non-negative weights. Recently, in \cite{Kos1} Kośmider characterized bilateral shifts with quasi-invertible weights, which are unitarily equivalent by a unitary operator of diagonal form. The aim of this paper is to generalize the results of \cite{Kos1}. In Section \ref{SecPreliminaries} we introduce the notation and recall several basic facts about bilateral weighted shifts, which are crucial in the subsequent part of this paper. The main results are presented in Section \ref{SecUnitaryEquivalence}. In Theorem \ref{ThmGeneralCharacterizationOfUnitaryEquivalence} we present the counterpart of \cite[Corollary 3.3]{Lam3} for bilateral weighted shifts. In Corollary \ref{CorCharacterizationOfUnitaryEquivalenceByDiagonalForm} we show how to deduce the condition given in \cite[Corollary 2.4]{Kos1} from our result. We also give some more convenient characterizations of unitary equivalence under further assumptions on weights. In Theorem \ref{ThmCharacterizationOfUnitaryEquivalenceByPolarDecompositions} we characterize unitary equivalence in terms of factors in polar decomposition of the bilateral weighted shift. The remaining part of Section \ref{SecUnitaryEquivalence} is devoted to study certain shifts with positive weights. In Theorem \ref{Thm2DimUnitaryEquivalenceAtMostTwoNonZeroDiagonals} we prove that when $ H $ is two dimensional, then under certain assumptions on weights the unitary equivalence is always given by a unitary operator with at most two non-zero diagonals. At the end of this section we provide a class of examples of two unitarily equivalent shifts with weights on $ \mathbb{C}^{k} $ such that every unitary operator intertwining these two shifts has at least $ k $ non-zero diagonals.
	\section{Preliminaries}
\label{SecPreliminaries}
Denote by $ \mathbb{N} $ and $\mathbb{Z} $ the set of non-negative integers and integers, respectively and by $ \mathbb{R} $ and $ \mathbb{C} $ the field of real and complex numbers, respectively. For $ p \in \mathbb{R} $ and $ A\subset \mathbb{R} $ we set
\begin{equation*}
	A_{p} = \{x\in A\!: x\ge p \}.
\end{equation*}
If $ H,K $ are complex Hilbert spaces, then $ \mathbf{B}(H,K) $ stands for the space of all linear and bounded operators from $ H $ to $ K $; if $ H = K $, then we simply write $ \mathbf{B}(H) $. For $ T\in \mathbf{B}(H) $ we denote by $ \mathcal{R}(T) $ and $ \mathcal{N}(T) $ the range of $ T $ and the kernel of $ T $, respectively. An operator $ T\in \mathbf{B}(H) $ is called \textit{normal} if $ T $ commutes with its adjoint; $ T $ is \textit{positive} if $ \langle Th,h\rangle >0 $ for every $ h\in H $, $ h\not=0 $. We say that an operator $ T\in \mathbf{B}(H) $ is \textit{quasi-invertible} if $ \mathcal{N}(T) = \{0\} $ and $ \mathcal{N}(T^{\ast}) = \{0\} $. It can be easily seen that
\begin{equation}
    \label{FormQuasiInvertibleClosedUnderProducts}
    T,S \text{ -- quasi-invertible} \implies TS \text{ -- quasi-invertible}, \qquad T,S\in \mathbf{B}(H),
\end{equation}
and that
\begin{equation*}
    T \text{ -- positive} \implies T \text{ -- quasi-invertible}, \qquad T\in \mathbf{B}(H).
\end{equation*} 
An operator $ U\in \mathbf{B}(H) $ is called a \textit{partial isometry} if $ U|_{\mathcal{N}(U)^{\perp}} $ is isometric. If $ T\in \mathbf{B}(H) $, then there exists the unique partial isometry $ U\in \mathbf{B}(H) $ such that $ \mathcal{N}(T) = \mathcal{N}(U) $ and $ T = U\lvert T\rvert $, where $ \lvert T\rvert = (T^{\ast}T)^{1/2} $ (see \cite[Theorem 7.20]{Wei1}).
\begin{equation}
    \label{FormPolarDecompositionGeneral}
    \begin{minipage}{10cm}
        The polar decomposition of $T$ is the decomposition $T = U\lvert T\rvert$, where $U$ is the unique partial isometry satisfying $\mathcal{N}(U) = \mathcal{N}(T)$.
    \end{minipage}
\end{equation}
We say that the operators $ T,S\in \mathbf{B}(H) $ are \textit{unitarily equivalent} if there exists a unitary operator $ U\in \mathbf{B}(H) $ such that $ US = TU $. We can easily show that if $ T,S\in \mathbf{B}(H) $ are unitarily equivalent, then so are $ S^{\ast} $ and $ T^{\ast} $ (by the same unitary operator). A family $ \mathcal{F}\subset \mathbf{B}(H) $ of operators is \textit{doubly commuting} if for every $ A,B\in \mathcal{F} $, $ AB = BA $ and $ A^{\ast}B = BA^{\ast} $. If $ (M_{n})_{n\in \mathbb{Z}} $ is the sequence of subspaces of $ H $, then we denote by $ \bigvee_{n\in \mathbb{Z}} M_{n} $ the closed linear span of $ \bigcup_{n\in \mathbb{Z}} M_{n} $. If $ (H_{i})_{i\in \mathbb{Z}} $ is the sequence of Hilbert spaces, then we define its orthogonal sum as follows:
\begin{equation*}
    \bigoplus_{i\in \mathbb{Z}} H_{i} = \left\{ h = (h_{i})_{i\in \mathbb{Z}}\in \prod_{i\in \mathbb{Z}}H_{i}\!: \sum_{i\in \mathbb{Z}} \lVert h_{n}\rVert^{2}<\infty \right\};
\end{equation*}
this is a Hilbert space with the inner product given by the formula:
\begin{equation*}
    \langle (h_{i})_{i\in \mathbb{Z}}, (h_{i}')_{i\in \mathbb{Z}}\rangle = \sum_{i\in \mathbb{Z}}\langle h_{i},h_{i}'\rangle, \qquad (h_{i})_{i\in \mathbb{Z}},(h_{i}')_{i\in \mathbb{Z}}\in \bigoplus_{i\in \mathbb{Z}} H_{i}.
\end{equation*}
If $ H_{i} = H $ for every $ i\in \mathbb{Z} $, then the above orthogonal sum will be denoted by $ \ell^{2}(\mathbb{Z},H) $. 
For $ x\in H $ and $ k\in \mathbb{Z} $ denote by $ x^{(k)} \in \ell^{2}(\mathbb{Z},H) $ the vector given as follows: $ x^{(k)}_{k} = x $ and $ x^{(k)}_{i} = 0 $ for $ i\in \mathbb{Z}\setminus\{k\} $.
Note that every operator $ T\in \mathbf{B}(\ell^{2}(\mathbb{Z},H)) $ has a matrix representation $ [T_{i,j}]_{i,j\in \mathbb{Z}} $, where $ T_{i,j}\in \mathbf{B}(H) $, satisfying the following formula (see \cite[Chapter 8]{Hal1}):
\begin{equation*}
    T(x_{i})_{i\in \mathbb{Z}} = \left( \sum_{j\in \mathbb{Z}} T_{i,j} x_{j} \right)_{i\in \mathbb{Z}}\!\!\!\!\!\!\!\!\!\!, \qquad (x_{i})_{i\in \mathbb{Z}} \in \ell^{2}(\mathbb{Z},H).
\end{equation*}
Observe that
\begin{equation}
    \label{FormAdjointOfMatrix}
    T = [T_{i,j}]_{i,j\in \mathbb{Z}} \in \mathbf{B}(\ell^{2}(\mathbb{Z},H)) \implies T^{\ast} = [T^{\ast}_{j,i}]_{i,j\in \mathbb{Z}},
\end{equation}
and
\begin{equation}
    \label{FormProductOfMatrices}
    T = [T_{i,j}]_{i,j\in \mathbb{Z}}, S = [S_{i,j}]_{i,j\in \mathbb{Z}} \in \mathbf{B}(\ell^{2}(\mathbb{Z},H)) \implies TS = \left[ \sum_{k\in \mathbb{Z}} T_{i,k}S_{k,j} \right]_{i,j\in \mathbb{Z}}.
\end{equation} For the convenience we introduce the following notation: if $ (A_{i})_{i\in \mathbb{Z}}\subset \mathbf{B}(H) $ is a uniformly bounded sequence of operators, then by $ D[(A_{i})_{i\in \mathbb{Z}}] \in \mathbf{B}\left( \ell^{2}(\mathbb{Z},H) \right) $ we denote the diagonal operator with operators $ A_{i} $ ($ i\in \mathbb{Z} $) on the diagonal, that is, $ D[(A_{i})_{i\in \mathbb{Z}}]\!: \ell^{2}(\mathbb{Z},H) \to \ell^{2}(\mathbb{Z},H) $ is defined as
\begin{align}
    \label{FormDiagonalOperatorDef}
    D[(A_{i})_{i\in \mathbb{Z}}](x_{i})_{i\in \mathbb{Z}} = (A_{i}x_{i})_{i\in \mathbb{Z}}, \qquad (x_{i})_{i\in \mathbb{Z}} \in \ell^{2}(\mathbb{Z},H).
\end{align}
If $ (S_{i})_{i\in \mathbb{Z}}\subset \mathbf{B}(H) $ is a uniformly bounded sequence of operators, then we define the bilateral weighted shift $ S\in \mathbf{B}\left(\ell^{2}(\mathbb{Z},H) \right) $ with weights $ (S_{i})_{i\in \mathbb{Z}} $ by the formula:
\begin{equation*}
    S(x_{i})_{i\in \mathbb{Z}} = (S_{i}x_{i-1})_{i\in \mathbb{Z}}, \qquad (x_{i})_{i\in \mathbb{Z}}\in \ell^{2}(\mathbb{Z},H).
\end{equation*}
By $ F $ we denote the bilateral weighted shift with all weights equal to the identity operator on $ H $; it is easy to see that $ F $ is a unitary operator. We say that an operator $ T\in \mathbf{B}\left( \ell^{2}(\mathbb{Z},H) \right) $ is of diagonal form if there exists a uniformly bounded sequence $ (A_{i})_{i\in \mathbb{Z}}\subset \mathbf{B}(H) $ such that $ T = F^{k}D[(A_{i})_{i\in \mathbb{Z}}] $ for some $ k\in \mathbb{Z} $.
In the following lemma we gather basic properties of bilateral weighted shifts with operator weights; the proof (which is a straightforward application of \eqref{FormMatrixOfOperatorShift}, \eqref{FormAdjointOfMatrix} and \eqref{FormProductOfMatrices}) is left to the reader.
\begin{lemma}
    \label{LemBasicPropertiesOfOperatorShifts}
    Let $ H $ be a complex Hilbert space. Let $ (S_{i})_{i\in \mathbb{Z}}\subset \mathbf{B}(H) $ be a uniformly bounded sequence of operators and let $ S $ be the bilateral weighted shift $ S\in \mathbf{B}\left(\ell^{2}(\mathbb{Z},H) \right) $ with weights $ (S_{i})_{i\in \mathbb{Z}} $. Then:
    \begin{enumerate}
        \item for every $ n\in \mathbb{N}_{1} $,
        \begin{equation*}
            (S^{n})_{i,j} = \begin{cases}
                0, & \text{for } i\not=j+n\\
                S_{j+n}\cdots S_{j+1}, & \text{for } i = j+n
            \end{cases}, \qquad i,j\in \mathbb{Z},
        \end{equation*}
        are the entries of the matrix representation of $ S^{n} $,
        \item for every $ n\in \mathbb{N}_{1} $,
        \begin{equation*}
            (S^{\ast n})_{i,j} = \begin{cases}
                0, & \text{for } j\not=i+n\\
                S_{i+1}^{\ast}\cdots S_{i+n}^{\ast}, & \text{for } j = i+n
            \end{cases}, \qquad i,j\in \mathbb{Z},
        \end{equation*}
        are the entries of the matrix representation of $ S^{\ast n} $,
        \item for every $ n\in \mathbb{N}_{1} $,
        \begin{equation*}
            (S^{\ast n}S^{n})_{i,j} = \begin{cases}
                0, & \text{for } i\not=j\\
                \lvert S_{j+n}\cdots S_{j+1}\rvert^{2}, & \text{for } i = j
            \end{cases}, \qquad i,j\in \mathbb{Z},
        \end{equation*}
        are the entries of the matrix representation of $ S^{\ast n}S^{n} $,
        \item for every $ n\in \mathbb{N}_{1} $,
        \begin{equation*}
            (S^{n}S^{\ast n})_{i,j} = \begin{cases}
                0, & \text{for } j\not=i\\
                \lvert S_{i-n+1}^{\ast}\cdots S_{i}^{\ast}\rvert^{2}, & \text{for } j = i
            \end{cases}, \qquad i,j\in \mathbb{Z},
        \end{equation*}
        are the entries of the matrix representation of $ S^{n}S^{\ast n} $.
    \end{enumerate}
\end{lemma}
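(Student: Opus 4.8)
The plan is to reduce everything to the single observation that $S$ has just one nonzero diagonal. Comparing the defining formula $S(x_i)_{i\in\mathbb{Z}} = (S_i x_{i-1})_{i\in\mathbb{Z}}$ with \eqref{FormMatrixOfOperatorShift}, one reads off that $S_{i,j} = S_i$ when $j = i-1$ and $S_{i,j} = 0$ otherwise. I would prove (i) by induction on $n$, the case $n=1$ being exactly this reading. For the inductive step I would write $S^{n+1} = S\cdot S^n$ and apply \eqref{FormProductOfMatrices}:
\begin{equation*}
    (S^{n+1})_{i,j} = \sum_{k\in\mathbb{Z}} S_{i,k}\,(S^n)_{k,j}.
\end{equation*}
Since $S_{i,k}$ vanishes unless $k = i-1$ and, by the inductive hypothesis, $(S^n)_{k,j}$ vanishes unless $k = j+n$, at most one term survives, and only when $i = j+n+1$; there it equals $S_i\cdot(S_{j+n}\cdots S_{j+1}) = S_{j+n+1}\cdots S_{j+1}$, which is the claimed formula for $n+1$.

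Part (ii) I would deduce from (i) via \eqref{FormAdjointOfMatrix}. As $S^{\ast n} = (S^n)^{\ast}$, that identity gives $(S^{\ast n})_{i,j} = ((S^n)_{j,i})^{\ast}$, which by (i) is nonzero precisely when $j = i+n$; there $(S^n)_{j,i} = S_{i+n}\cdots S_{i+1}$, and passing to the adjoint reverses the factor order to give $S_{i+1}^{\ast}\cdots S_{i+n}^{\ast}$, as asserted.

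For (iii) and (iv) I would invoke \eqref{FormProductOfMatrices} once more, now combining (i) and (ii). For (iii),
\begin{equation*}
    (S^{\ast n}S^n)_{i,j} = \sum_{k\in\mathbb{Z}} (S^{\ast n})_{i,k}\,(S^n)_{k,j},
\end{equation*}
and the two factors force $k = i+n$ and $k = j+n$ at once, so the entry vanishes unless $i=j$; on the diagonal it equals $(S_{j+1}^{\ast}\cdots S_{j+n}^{\ast})(S_{j+n}\cdots S_{j+1})$, which is $\lvert S_{j+n}\cdots S_{j+1}\rvert^{2}$ by the definition $\lvert T\rvert^{2} = T^{\ast}T$. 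Part (iv) is entirely parallel with the roles of (i) and (ii) interchanged: the constraints become $k = i-n$ and $k = j-n$, again forcing $i=j$, and the diagonal entry $(S_i\cdots S_{i-n+1})(S_{i-n+1}^{\ast}\cdots S_i^{\ast})$ is recognized as $\lvert S_{i-n+1}^{\ast}\cdots S_i^{\ast}\rvert^{2}$.

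The argument is pure bookkeeping, which is presumably why the authors leave it to the reader; the only points requiring genuine care are the reversal of the factor order under the adjoint in (ii) and keeping the index shifts consistent when the two single-diagonal constraints are intersected in (iii) and (iv). No convergence or boundedness issue arises, since every matrix entry computed is a finite product of the uniformly bounded weights.
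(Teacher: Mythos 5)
Your proof is correct and is precisely the argument the paper intends: the lemma's proof is explicitly left to the reader as ``a straightforward application'' of \eqref{FormMatrixOfOperatorShift}, \eqref{FormAdjointOfMatrix} and \eqref{FormProductOfMatrices}, and your induction for (i), passage to adjoints for (ii), and single-surviving-term computations for (iii) and (iv) carry that out exactly, with all index shifts and the factor-order reversal under the adjoint handled correctly. Nothing to add.
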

The next lemma characterizes the operators, which intertwines two bilateral weighted shifts (see also \cite[Lemma 4]{Pili1}).
\begin{lemma}
    \label{LemInterweavingWeightedShifts}
    Let $ H $ be a complex Hilbert space. Let $ (S_{i})_{i\in \mathbb{Z}}, (T_{i})_{i\in \mathbb{Z}} \subset \mathbf{B}(H) $ be two uniformly bounded sequences of operators. Denote by $ S,T $ the bilateral weighted shifts with weights $ (S_{i})_{i\in \mathbb{Z}} $ and $ (T_{i})_{i\in \mathbb{Z}} $, respectively. Suppose $ A = [A_{i,j}]_{i,j\in \mathbb{Z}}\in \mathbf{B}(\ell^{2}(\mathbb{Z},H)) $. Let $ n\in \mathbb{N}_{1} $. Then:
    \begin{enumerate}
        \item $ AS^{n} = T^{n}A $ if and only if
        \begin{equation*}
            A_{i+n,j+n}(S_{j+n}\cdots S_{j+1}) = T_{i+n}\cdots T_{i+1}A_{i,j}, \qquad i,j\in \mathbb{Z};
        \end{equation*}
        \item $ AS^{\ast n} = T^{\ast n}A $ if and only if
        \begin{equation*}
            A_{i,j}(S_{j+1}^{\ast}\cdots S_{j+n}^{\ast}) = T_{i+1}^{\ast}\cdots T_{i+n}^{\ast}A_{i+n,j+n}, \qquad i,j\in \mathbb{Z}
        \end{equation*}
    \end{enumerate}
\end{lemma}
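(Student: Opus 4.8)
The plan is to prove both equivalences by a direct comparison of matrix entries, using the product formula \eqref{FormProductOfMatrices} together with the explicit entries of the powers $S^{n}$ and $T^{n}$ (and their adjoints) supplied by Lemma \ref{LemBasicPropertiesOfOperatorShifts}. The structural fact that makes this painless is that each of $S^{n}$, $T^{n}$, $S^{\ast n}$, $T^{\ast n}$ has exactly one nonzero diagonal: by Lemma \ref{LemBasicPropertiesOfOperatorShifts}(i) the entry $(S^{n})_{k,j}$ vanishes unless $k = j+n$. Consequently, when I expand $(AS^{n})_{i,j} = \sum_{k\in \mathbb{Z}} A_{i,k}(S^{n})_{k,j}$ via \eqref{FormProductOfMatrices}, the infinite sum collapses to the single surviving term $k = j+n$, giving $(AS^{n})_{i,j} = A_{i,j+n}\,S_{j+n}\cdots S_{j+1}$. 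There are thus no convergence subtleties to handle; the whole argument is formal matrix algebra, legitimized entirely by \eqref{FormProductOfMatrices}.

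For part (i) I would compute the two sides separately. The left-hand side is $(AS^{n})_{i,j} = A_{i,j+n}\,S_{j+n}\cdots S_{j+1}$, as above. For the right-hand side, Lemma \ref{LemBasicPropertiesOfOperatorShifts}(i) forces the term $k = i-n$ in $(T^{n}A)_{i,j} = \sum_{k} (T^{n})_{i,k}A_{k,j}$, so that $(T^{n}A)_{i,j} = T_{i}\cdots T_{i-n+1}\,A_{i-n,j}$. Since an operator in $\mathbf{B}(\ell^{2}(\mathbb{Z},H))$ is determined by its matrix, $AS^{n} = T^{n}A$ holds if and only if $A_{i,j+n}\,S_{j+n}\cdots S_{j+1} = T_{i}\cdots T_{i-n+1}\,A_{i-n,j}$ for all $i,j\in \mathbb{Z}$. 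Replacing $i$ by $i+n$ throughout (a permissible relabelling, as the condition ranges over all integers) turns this into the asserted identity $A_{i+n,j+n}(S_{j+n}\cdots S_{j+1}) = T_{i+n}\cdots T_{i+1}A_{i,j}$.

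For part (ii) I would argue identically, now invoking Lemma \ref{LemBasicPropertiesOfOperatorShifts}(ii): the single surviving terms are $k = j-n$ on the left and $k = i+n$ on the right, yielding $(AS^{\ast n})_{i,j} = A_{i,j-n}\,S_{j-n+1}^{\ast}\cdots S_{j}^{\ast}$ and $(T^{\ast n}A)_{i,j} = T_{i+1}^{\ast}\cdots T_{i+n}^{\ast}\,A_{i+n,j}$; equating entries and reindexing $j\mapsto j+n$ produces the stated formula. Alternatively, one can deduce (ii) from (i) by taking adjoints: by \eqref{FormAdjointOfMatrix} the relation $AS^{\ast n} = T^{\ast n}A$ is equivalent to $S^{n}A^{\ast} = A^{\ast}T^{n}$, which is exactly the intertwining of part (i) applied to the operator $A^{\ast}$ with the roles of $S$ and $T$ interchanged.

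I expect no real obstacle here; the only delicate point is consistent index bookkeeping — keeping track of which index is shifted by $n$ in each factor and carrying out the final relabelling so that the output matches verbatim the form in the statement. The hard part is therefore purely organizational rather than mathematical.
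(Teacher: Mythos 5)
Your proposal is correct and is essentially the paper's own proof: both expand $(AS^{n})_{i,j}$ and $(T^{n}A)_{i,j}$ (and their adjoint counterparts) via \eqref{FormProductOfMatrices}, use Lemma \ref{LemBasicPropertiesOfOperatorShifts} to collapse each sum to its single nonzero term, and then relabel $i\mapsto i+n$ for (i) and $j\mapsto j+n$ for (ii). Your alternative derivation of (ii) from (i) by taking adjoints via \eqref{FormAdjointOfMatrix} is a small extra that also works, though the paper does not use it.
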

\begin{proof}
    Observe that, by Lemma \ref{LemBasicPropertiesOfOperatorShifts},
    \begin{align*}
        (AS^{n})_{i,j} &= \sum_{k\in \mathbb{Z}} A_{i,k}(S^{n})_{k,j} = A_{i,j+n}S_{j+n}\cdots S_{j+1}, \qquad i,j\in \mathbb{Z},\\
        (T^{n}A)_{i,j} &= \sum_{k\in \mathbb{Z}} (T^{n})_{i,k}A_{k,j} = T_{i}\cdots T_{i-n+1}A_{i-n,j}, \qquad i,j\in \mathbb{Z},
    \end{align*}
    are the entires of matrix representations of $ AS^{n} $ and $ T^{n}A $, respectively. Replacing $ i $ with $ i+n $ in the above equalities, we obtain (i). In turn, again by Lemma \ref{LemBasicPropertiesOfOperatorShifts},
    \begin{align*}
        (AS^{\ast n})_{i,j} &= \sum_{k\in \mathbb{Z}} A_{i,k}(S^{\ast n})_{k,j} = A_{i,j-n}S_{j-n+1}^{\ast}\cdots S_{j}^{\ast}, \qquad i,j\in \mathbb{Z},\\
        (T^{\ast n}A)_{i,j} &= \sum_{k\in \mathbb{Z}} (T^{\ast n})_{i,k}A_{k,j} = T_{i+1}^{\ast}\cdots T_{i+n}^{\ast}A_{i+n,j}, \qquad i,j\in \mathbb{Z},
    \end{align*}
    are the entries of matrix representations of $ AS^{\ast n} $ and $ T^{\ast n}A $, respectively. Replacing $ j $ with $ j+n $ in the above equalities, we obtain (ii).
\end{proof}
In the following lemma we describe the polar decomposition of a bilateral weighted shift.
\begin{lemma}
    \label{LemPolarDecompositionOfShift}
    Let $ H $ be a complex Hilbert space. Let $ (S_{i})_{i\in \mathbb{Z}}\subset \mathbf{B}(H) $ be a uniformly bounded sequence of operators and let $ S $ be the bilateral weighted shift with weights $ (S_{i})_{i\in \mathbb{Z}} $. For $ i\in \mathbb{Z} $ let $ S_{i} = V_{i}\lvert S_{i}\rvert $ be the polar decomposition of $ S_{i} $. Denote by $ V $ the bilateral weighted shift with weights $ (V_{i})_{i\in \mathbb{Z}} $. Then $ S = V\lvert S\rvert $ is the polar decomposition of $ S $.
\end{lemma}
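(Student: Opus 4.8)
The plan is to produce the three ingredients of the polar decomposition separately: identify $\lvert S\rvert$, check that $V\lvert S\rvert = S$, and verify that $V$ is exactly the partial isometry demanded by \eqref{FormPolarDecompositionGeneral}; the uniqueness built into \eqref{FormPolarDecompositionGeneral} then closes the argument.

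First I would compute $\lvert S\rvert = (S^{\ast}S)^{1/2}$. By Lemma \ref{LemBasicPropertiesOfOperatorShifts}(iii) with $n=1$ the matrix of $S^{\ast}S$ is diagonal, with $(S^{\ast}S)_{i,i} = \lvert S_{i+1}\rvert^{2}$, that is $S^{\ast}S = D[(\lvert S_{i+1}\rvert^{2})_{i\in\mathbb{Z}}]$. Since each $\lvert S_{i+1}\rvert^{2}$ is positive and $D[(\lvert S_{i+1}\rvert)_{i\in\mathbb{Z}}]$ is a positive operator (diagonal operators are positive precisely when their entries are) whose square equals $S^{\ast}S$, uniqueness of the positive square root gives $\lvert S\rvert = D[(\lvert S_{i+1}\rvert)_{i\in\mathbb{Z}}]$. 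Note the index shift: the entry of $\lvert S\rvert$ acting in the $i$-th slot is $\lvert S_{i+1}\rvert$, not $\lvert S_{i}\rvert$. Next I would verify $V\lvert S\rvert = S$ directly from the definitions: for $(x_{i})_{i\in\mathbb{Z}}\in\ell^{2}(\mathbb{Z},H)$ one has $\lvert S\rvert(x_{i})_{i\in\mathbb{Z}} = (\lvert S_{i+1}\rvert x_{i})_{i\in\mathbb{Z}}$, hence $V\lvert S\rvert(x_{i})_{i\in\mathbb{Z}} = (V_{i}\lvert S_{i}\rvert x_{i-1})_{i\in\mathbb{Z}} = (S_{i}x_{i-1})_{i\in\mathbb{Z}} = S(x_{i})_{i\in\mathbb{Z}}$, where the middle equality uses $S_{i} = V_{i}\lvert S_{i}\rvert$; the same identity can instead be read off from \eqref{FormProductOfMatrices}.

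It then remains to show that $V$ is the correct partial isometry. For the partial isometry property, Lemma \ref{LemBasicPropertiesOfOperatorShifts}(iii) applied to $V$ yields $V^{\ast}V = D[(V_{i+1}^{\ast}V_{i+1})_{i\in\mathbb{Z}}]$; since each $V_{i+1}$ is a partial isometry, each $V_{i+1}^{\ast}V_{i+1}$ is an orthogonal projection, so $V^{\ast}V$ is a diagonal operator all of whose diagonal entries are orthogonal projections, hence itself an orthogonal projection, which means $V$ is a partial isometry. For the kernel condition, describing kernels of weighted shifts gives $\mathcal{N}(S) = \{(x_{i})_{i\in\mathbb{Z}}\!: x_{i}\in\mathcal{N}(S_{i+1}),\ i\in\mathbb{Z}\}$ and the analogous formula for $V$; since $\mathcal{N}(S_{i+1}) = \mathcal{N}(V_{i+1})$ for every $i$ (these are the polar decompositions of the weights), we conclude $\mathcal{N}(V) = \mathcal{N}(S)$. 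By the uniqueness in \eqref{FormPolarDecompositionGeneral}, $S = V\lvert S\rvert$ is the polar decomposition of $S$.

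All the individual computations are routine applications of Lemma \ref{LemBasicPropertiesOfOperatorShifts} and \eqref{FormProductOfMatrices}, so there is no deep obstacle; the one place that genuinely requires care is the index shift recorded above. One must keep track of the fact that the $i$-th diagonal block of $\lvert S\rvert$ is $\lvert S_{i+1}\rvert$, and then match this against the shift structure of $V$ so that the product $V\lvert S\rvert$ reassembles the weights in the correct order to recover $S$; getting this bookkeeping right is what makes the verification $V\lvert S\rvert = S$ (and the corresponding identification of $V^{\ast}V$) come out cleanly.
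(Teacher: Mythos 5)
Your proposal is correct and follows essentially the same route as the paper: identify $\lvert S\rvert = D[(\lvert S_{i+1}\rvert)_{i\in\mathbb{Z}}]$ via Lemma \ref{LemBasicPropertiesOfOperatorShifts}, check $V\lvert S\rvert = S$ with the correct index bookkeeping, show $\mathcal{N}(V) = \mathcal{N}(S)$ coordinatewise from $\mathcal{N}(V_{i}) = \mathcal{N}(S_{i})$, and invoke the uniqueness in \eqref{FormPolarDecompositionGeneral}. If anything, you are slightly more careful than the paper, which asserts that $V$ is a partial isometry without the explicit $V^{\ast}V$-is-a-projection verification you supply.
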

\begin{proof}
    For every $ (x_{i})_{i\in \mathbb{Z}}\in \ell^{2}(\mathbb{Z},H) $ we have
    \begin{align*}
        S(x_{i})_{i\in \mathbb{Z}} &= (S_{i}x_{i-1})_{i\in \mathbb{Z}} = (V_{i}\lvert S_{i}\rvert x_{i-1})_{i\in \mathbb{Z}} \\
        &= FD[(V_{i+1})_{i\in \mathbb{Z}}]D[(\lvert S_{i+1}\rvert)_{i\in \mathbb{Z}}](x_{i})_{i\in \mathbb{Z}}.
    \end{align*}
    By Lemma \ref{LemBasicPropertiesOfOperatorShifts}, $ D[(\lvert S_{i+1}\rvert)_{i\in \mathbb{Z}}] = \lvert S\rvert $. It can be easily verified that $ V = FD[(V_{i+1})_{i\in \mathbb{Z}}] $. Next,
    \begin{align*}
        \mathcal{N}(S) &= \left\{ (x_{i})_{i\in \mathbb{Z}}\in \ell^{2}(\mathbb{Z},H)\!: S_{i}x_{i-1} = 0 \text{ for all } i\in \mathbb{Z} \right\}\\
        &\stackrel{\eqref{FormPolarDecompositionGeneral}}{=}\left\{ (x_{i})_{i\in \mathbb{Z}}\in \ell^{2}(\mathbb{Z},H)\!: V_{i}x_{i-1} = 0 \text{ for all } i\in \mathbb{Z} \right\} = \mathcal{N}(V).
    \end{align*}
    Thus, $ V $ is a partial isometry satisfying $ \mathcal{N}(S) = \mathcal{N}(V) $, so $ S = V\lvert S\rvert $ is the polar decomposition of $ S $ as in \eqref{FormPolarDecompositionGeneral}.
\end{proof}
	\section{Unitary equivalence of weighted shifts}
\label{SecUnitaryEquivalence}
In this section we provide a general characterization of unitarily equivalent weighted shifts with operator weights. The result presented below is the counterpart of the theorem of Lambert (cf. \cite[Corollary 3.3]{Lam3}).
\begin{theorem}
    \label{ThmGeneralCharacterizationOfUnitaryEquivalence}
    Let $ H $ be a Hilbert space. Let $ (S_{i})_{i\in \mathbb{Z}}, (T_{i})_{i\in \mathbb{Z}} \subset \mathbf{B}(H) $ be two uniformly bounded sequences of quasi-invertible operators and let $ S,T $ be the bilateral weighted shifts with weights $ (S_{i})_{i\in \mathbb{Z}} $ and $ (T_{i})_{i\in \mathbb{Z}} $, respectively.
    \begin{enumerate}
        \item If $ US = TU $ for a unitary operator $ U\in \mathbf{B}(\ell^{2}(\mathbb{Z},H)) $, then for the isometry $ U_{0}\in \mathbf{B}(H,\ell^{2}(\mathbb{Z},H)) $ defined by $ U_{0}x = Ux^{(0)} $, $ x\in H $, the following conditions hold:
        \begin{align}
            \label{FormPositiveWeights}
            &U_{0}\lvert S_{n}\cdots S_{1}\rvert = D[(\lvert T_{i+n}\cdots T_{i+1}\rvert)_{i\in \mathbb{Z}}]U_{0}, \qquad n\in \mathbb{N}_{1},\\
            \label{FormNegativeWeights}
            &U_{0}\lvert S_{-n+1}^{\ast}\cdots S_{0}^{\ast}\rvert = D[(\lvert T_{i-n+1}^{\ast}\cdots T_{i}^{\ast}\rvert)_{i\in \mathbb{Z}}]U_{0}, \qquad n\in \mathbb{N}_{1},\\
            \label{FormWanderingProperty}
            & T^{[k]}(\mathcal{R}(U_{0}))\perp T^{[m]}(\mathcal{R}(U_{0})), \qquad k,m\in \mathbb{Z}, \ k\not= m,\\
            \label{FormClosedSpan}
            &\bigvee_{i\in \mathbb{Z}} T^{[i]}(\mathcal{R}(U_{0})) = \ell^{2}(\mathbb{Z},H),
        \end{align}
        where
        \begin{equation*}
            T^{[k]} = \begin{cases}
                T^{k}, & \text{if } k\in \mathbb{N}_{1}\\
                I, & \text{if }k = 0\\
                T^{\ast -k}, & \text{if } -k\in \mathbb{N}_{1}
            \end{cases}.
        \end{equation*}
        \item If there exists an isometry $ U_{0}\in \mathbf{B}(H,\ell^{2}(\mathbb{Z},H)) $ satisfying \eqref{FormPositiveWeights}-\eqref{FormClosedSpan}, then $ S $ and $ T $ are unitarily equivalent.
    \end{enumerate}
\end{theorem}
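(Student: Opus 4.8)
The plan is to prove the two implications separately, organising everything around the ``position zero'' isometry $U_{0}$ and the orthogonal decomposition of $\ell^{2}(\mathbb{Z},H)$ into the coordinate subspaces $H_{k}:=\{x^{(k)}\!:x\in H\}$. For part (i), set $U_{0}x=Ux^{(0)}$ and note first that $US^{n}=T^{n}U$ for all $n$, and that $US^{\ast n}=T^{\ast n}U$ as well since $U$ is unitary. Conjugating the diagonal operators of Lemma \ref{LemBasicPropertiesOfOperatorShifts}(iii)--(iv) gives $U(S^{\ast n}S^{n})=(T^{\ast n}T^{n})U$ and $U(S^{n}S^{\ast n})=(T^{n}T^{\ast n})U$; evaluating at $x^{(0)}$ and reading off the zeroth coordinate yields the \emph{squared} forms of \eqref{FormPositiveWeights}--\eqref{FormNegativeWeights}, e.g.\ $U_{0}\lvert S_{n}\cdots S_{1}\rvert^{2}=D[(\lvert T_{i+n}\cdots T_{i+1}\rvert^{2})_{i\in\mathbb{Z}}]U_{0}$. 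To pass from squares to moduli I would approximate $\sqrt{\,\cdot\,}$ uniformly by polynomials on a common interval containing the spectra and use that $U_{0}$ intertwines every polynomial in these positive operators; this gives \eqref{FormPositiveWeights}--\eqref{FormNegativeWeights}. For \eqref{FormWanderingProperty}--\eqref{FormClosedSpan} I would use $US^{[k]}=T^{[k]}U$ to write $T^{[k]}(\mathcal{R}(U_{0}))=U(\{S^{[k]}x^{(0)}\!:x\in H\})$; by Lemma \ref{LemBasicPropertiesOfOperatorShifts} these vectors are supported at coordinate $k$ with entry in $\mathcal{R}(S_{k}\cdots S_{1})$ (for $k\ge1$) or $\mathcal{R}(S_{-k+1}^{\ast}\cdots S_{0}^{\ast})$ (for $k\le-1$), which is dense in $H_{k}$ by quasi-invertibility and \eqref{FormQuasiInvertibleClosedUnderProducts}. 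Orthogonality of distinct $H_{k}$ and their total span, transported by the unitary $U$, then give \eqref{FormWanderingProperty} and \eqref{FormClosedSpan}.

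For part (ii) I would build $U$ from $U_{0}$ on the generating system. Writing $M_{k}:=\overline{T^{[k]}(\mathcal{R}(U_{0}))}$, define $U$ on the dense span of $\{S^{[k]}x^{(0)}\!:k\in\mathbb{Z},\,x\in H\}$ by $US^{[k]}x^{(0)}=T^{[k]}U_{0}x$. Since the $S^{[k]}x^{(0)}$ lie in the pairwise orthogonal subspaces $H_{k}$, a finite combination has squared norm $\sum_{k}\lVert S^{[k]}x_{k}^{(0)}\rVert^{2}$; on the image side \eqref{FormWanderingProperty} provides the matching orthogonal splitting, and \eqref{FormPositiveWeights}--\eqref{FormNegativeWeights} with Lemma \ref{LemBasicPropertiesOfOperatorShifts}(iii)--(iv) and $U_{0}^{\ast}U_{0}=I$ give $\lVert T^{[k]}U_{0}x\rVert=\lVert S^{[k]}x^{(0)}\rVert$ term by term. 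Hence $U$ is well defined and isometric, so it extends to an isometry of $\ell^{2}(\mathbb{Z},H)$; its range is closed and contains every $T^{[k]}(\mathcal{R}(U_{0}))$, so \eqref{FormClosedSpan} forces it onto. Thus $U$ is unitary and $U(H_{k})=M_{k}$.

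It remains to verify $US=TU$, and here I expect the real work. Since the generators span, it suffices to check $U(SS^{[k]}x^{(0)})=TT^{[k]}U_{0}x$ for each $k$. For $k\ge0$ this is immediate, as $SS^{[k]}x^{(0)}=S^{[k+1]}x^{(0)}$ and $TT^{[k]}U_{0}x=T^{[k+1]}U_{0}x$. For $k=-m<0$ the difficulty is that $SS^{\ast m}$ does not telescope; using the identity $TT^{\ast m}=D[(\lvert T_{i}^{\ast}\rvert^{2})_{i\in\mathbb{Z}}]\,T^{\ast(m-1)}$ (and its analogue for $S$) I would split the goal into: (a) both $U(SS^{\ast m}x^{(0)})$ and $TT^{\ast m}U_{0}x$ lie in $M_{-m+1}$, and (b) they pair equally against the dense family $\{T^{[-m+1]}U_{0}y\}$ of $M_{-m+1}$; then the difference lies in $M_{-m+1}$ and is orthogonal to a dense subset of it, hence vanishes. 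Point (b) is a clean computation: using $T^{[-m+1]}U_{0}y=U(S^{\ast(m-1)}y^{(0)})$, unitarity of $U$, Lemma \ref{LemBasicPropertiesOfOperatorShifts}(iv) and \eqref{FormNegativeWeights}, both sides reduce to $\langle\lvert S_{-m+1}^{\ast}\cdots S_{0}^{\ast}\rvert^{2}x,y\rangle$. The left half of (a) is automatic since $SS^{\ast m}x^{(0)}\in H_{-m+1}$ and $U(H_{-m+1})=M_{-m+1}$.

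The genuinely delicate point, and the main obstacle, is the right half of (a): that $TT^{\ast m}U_{0}x\in M_{-m+1}$, i.e.\ the ladder property $T(M_{-m})\subseteq M_{-m+1}$. Writing $\Delta_{+}=T^{\ast}T$ and $\Delta_{-}=TT^{\ast}$, the relation $TT^{\ast m}=\Delta_{-}T^{\ast(m-1)}$ together with $T^{\ast(m-1)}U_{0}x\in M_{-m+1}$ reduces this to the invariance $\Delta_{-}(M_{-m+1})\subseteq M_{-m+1}$; that is, one must show the positive diagonal operators $\Delta_{\pm}$ preserve every $M_{k}$, equivalently that $U$ also intertwines $S^{\ast}S$ with $T^{\ast}T$ --- something that does \emph{not} follow from \eqref{FormPositiveWeights}--\eqref{FormClosedSpan} by termwise manipulation, since such manipulations (via the ladder relations $T\Delta_{+}=\Delta_{-}T$ and $\Delta_{+}T^{\ast}=T^{\ast}\Delta_{-}$) are self-referential. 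What I can anchor is the base case: \eqref{FormNegativeWeights} at $n=1$ gives $\Delta_{-}U_{0}=U_{0}\lvert S_{0}^{\ast}\rvert^{2}$, so $\Delta_{-}(\mathcal{R}(U_{0}))\subseteq\mathcal{R}(U_{0})$, and likewise \eqref{FormPositiveWeights} at $n=1$ gives $\Delta_{+}U_{0}=U_{0}\lvert S_{1}\rvert^{2}$. Propagating this invariance to all levels is where the argument must do something global; I would attack it through the polar decompositions $S=V\lvert S\rvert$, $T=W\lvert T\rvert$ of Lemma \ref{LemPolarDecompositionOfShift}, noting that $V,W$ are unitary because the weights are quasi-invertible, which lets one identify $M_{k}$ with $\Phi_{k}(\mathcal{R}(U_{0}))$ for the unitary phase $\Phi_{k}$ of $T^{k}$ and then check the invariance level by level. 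Once the ladder property $T(M_{k})\subseteq M_{k+1}$ is established for all $k$, point (a) is complete, so with (b) one obtains $U(SS^{[k]}x^{(0)})=TT^{[k]}U_{0}x$ for every $k$, and density of the generators yields $US=TU$.
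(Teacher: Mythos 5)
The gap is exactly where you flag it, and it is fatal to the argument as written: in part (ii), for $k=-m<0$ you reduce $US=TU$ to the ladder property $T(M_{-m})\subseteq M_{-m+1}$, equivalently to invariance of the spaces $M_{k}$ under $\Delta_{-}=TT^{\ast}$, and you do not prove it. As you yourself note, the hypotheses \eqref{FormPositiveWeights}--\eqref{FormNegativeWeights} control $U_{0}$ only against the moduli $\lvert T_{i+n}\cdots T_{i+1}\rvert$ and $\lvert T_{i-n+1}^{\ast}\cdots T_{i}^{\ast}\rvert$; your proposed repair via the unitary phases $\Phi_{k}$ of $T^{[k]}$ reduces the invariance to a containment of the form $D[(C_{i})_{i\in\mathbb{Z}}]\mathcal{R}(U_{0})\subseteq\mathcal{R}(U_{0})$, where each $C_{i}$ is a phase-conjugated $\lvert T_{\ell}^{\ast}\rvert^{2}$ --- and that is not among the given relations, while checking it \enquote{level by level} leads straight back to locating $TT^{\ast(j+1)}U_{0}x$, i.e.\ to the statement being proved. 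The ladder property is in fact equivalent to $US=TU$ (given the rest of your construction), so it cannot serve as a stepping stone; your base case $\Delta_{-}U_{0}=U_{0}\lvert S_{0}^{\ast}\rvert^{2}$ does not propagate.

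The paper closes this by dualizing instead of proving any range containment. It writes the columns explicitly through the unitary polar factors, $U_{-n}=F^{-n}D[(W_{-n,i}^{\ast})_{i\in\mathbb{Z}}]U_{0}V_{-n}$, and for $k<0$ verifies the adjoint identity $S_{k+1}^{\ast}U_{k+1}^{\ast}=U_{k}^{\ast}T^{\ast}$ of \eqref{ProofFormCoordinateInterwavingAdjoints} on the vectors $(T_{i+1}^{\ast}\cdots T_{i-k-1}^{\ast}x_{i-k-1})_{i\in\mathbb{Z}}$, which are dense \emph{in the domain} $\ell^{2}(\mathbb{Z},H)$ because the $T_{i}^{\ast}$ have dense ranges; on these vectors the phase factors cancel, since $W_{k,i}T_{i+k+1}^{\ast}\cdots T_{i}^{\ast}=\lvert T_{i+k+1}^{\ast}\cdots T_{i}^{\ast}\rvert$ by \eqref{ProofFormPolarDecompAdjointT}, so that \eqref{FormNegativeWeights} applies and both sides collapse to $S_{k+1}^{\ast}\cdots S_{0}^{\ast}U_{0}^{\ast}$. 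Taking adjoints yields $U_{k+1}S_{k+1}=TU_{k}$, and your containment $T(M_{k})\subseteq M_{k+1}$ falls out \emph{a posteriori}. The decisive move you are missing is precisely this: place the density in the domain of the adjoint column operators, all of $\ell^{2}(\mathbb{Z},H)$, rather than in the target subspaces $M_{-m+1}$, which is what breaks the circularity. The rest of your proposal is sound and parallels the paper: part (i) is correct (your polynomial-approximation passage from $U_{0}P=QU_{0}$ to $U_{0}P^{1/2}=Q^{1/2}U_{0}$ is a legitimate substitute for the paper's square-root/Berberian argument, and your coordinate-subspace derivation of \eqref{FormWanderingProperty}--\eqref{FormClosedSpan} is, if anything, cleaner), the construction and unitarity of $U$ in part (ii) coincide with the paper's extension-by-continuity of the explicit columns, the case $k\ge 0$ is fine, and your pairing computation (b) checks out; substituting the adjoint computation for your step (a) would complete the proof.
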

\begin{proof}
    (i). Let $ U = [U_{i,j}]_{i,j\in \mathbb{Z}}\in \mathbf{B} (\ell^{2}(\mathbb{Z},H)) $ be a unitary operator satisfying $ US = TU $. Then, we also have $ US^{\ast} = T^{\ast}U $. This implies that for every $ n\in \mathbb{N}_{1} $, $ U\lvert S^{n}\rvert^{2} = \lvert T^{n}\rvert^{2}U $ and $ U\lvert S^{\ast n}\rvert^{2} = \lvert T^{\ast n}\rvert^{2} U $. Using the square root theorem (see \cite[p.265]{Rie1}) and Berberian's trick (see \cite{Ber1}) we obtain that $ U\lvert S^{n}\rvert = \lvert T^{n}\rvert U $ and $ U\lvert S^{\ast n}\rvert = \lvert T^{\ast n}\rvert U $. By Lemma \ref{LemBasicPropertiesOfOperatorShifts}, the above equalities take the form
    \begin{align}
        \label{ProofFormModulusOfShift}
        U_{i,j}\lvert S_{j+n}\cdots S_{j+1}\rvert &= \lvert T_{i+n}\cdots T_{i+1}\rvert U_{i,j}, \qquad i,j \in \mathbb{Z},\ n\in \mathbb{N}_{1},\\
        \label{ProofFormModulusOfAdjoint}
        U_{i,j}\lvert S_{j-n+1}^{\ast}\cdots S_{j}^{\ast}\rvert &= \lvert T_{i-n+1}^{\ast}\cdots T_{i}^{\ast}\rvert U_{i,j}, \qquad i,j\in \mathbb{Z},\ n\in \mathbb{N}_{1}.
    \end{align}
    We will prove that the isometry $ U_{0} $ satisfies \eqref{FormPositiveWeights}-\eqref{FormClosedSpan}. By \eqref{ProofFormModulusOfShift} and \eqref{ProofFormModulusOfAdjoint}, \eqref{FormPositiveWeights} and \eqref{FormNegativeWeights} hold.  For $ n\in \mathbb{Z}\setminus\{0\} $ define $ U_{n}\!: H\to \ell^{2}(\mathbb{Z},H) $ by the formula:
    \begin{equation*}
        U_{n}x = Ux^{(n)}, \qquad x\in H.
    \end{equation*}
    Since $ U $ is unitary, $ U_{n} $ is isometric for all $ n\in \mathbb{Z} $. Next, by Lemma \ref{LemInterweavingWeightedShifts},
    \begin{equation}
        \label{ProofFormShiftInterweavingPositive}
        U_{i+n,n}S_{n}\cdots S_{1} = T_{i+n}\cdots T_{i+1}U_{i,0}, \qquad i\in \mathbb{Z},\ n\in\mathbb{N}_{1}.
    \end{equation}
    For $ n\in \mathbb{N}_{1} $ and $ i\in \mathbb{Z} $ let
    \begin{align}
        \label{ProofFormPolarDecompositionPositiveS}
        S_{n}\cdots S_{1} &= V_{n}\lvert S_{n}\cdots S_{1}\rvert\\
        \label{ProofFormPolarDecompositionPositiveT}
        T_{i+n}\cdots T_{i+1} &= W_{n,i}\lvert T_{i+n}\cdots T_{i+1}\rvert
    \end{align} 
    be the polar decompositions. Since, by \eqref{FormQuasiInvertibleClosedUnderProducts}, $ S_{n}\cdots S_{1} $ and $ T_{i+n}\cdots T_{i+1} $ are quasi-invertible, it follows from \eqref{FormPolarDecompositionGeneral} that partial isometries $ V_{n} $ and $ W_{n,i} $ ($ n\in \mathbb{N}_{1} $, $ i\in \mathbb{Z} $) are unitary operators. We have
    \begin{align*}
        &W_{n,i}\lvert T_{i+n}\cdots T_{i+1}\rvert U_{i,0} = T_{i+n}\cdots T_{i+1}U_{i,0}\\
        &\stackrel{\eqref{ProofFormShiftInterweavingPositive}}{=} U_{i+n,n}S_{n}\cdots S_{1} = U_{i+n,n}V_{n}\lvert S_{n}\cdots S_{1}\rvert, \qquad n\in \mathbb{N}_{1}, \ i\in \mathbb{Z}.
    \end{align*}
    This implies that for every $ n\in \mathbb{N}_{1}, \ i\in \mathbb{Z} $,
    \begin{equation*}
        W_{n,i}^{\ast}U_{i+n,n}V_{n}\lvert S_{n}\cdots S_{1}\rvert = \lvert T_{i+n}\cdots T_{i+1}\rvert U_{i,0} \stackrel{\eqref{ProofFormModulusOfShift}}{=} U_{i,0}\lvert S_{n}\cdots S_{1}\rvert.
    \end{equation*}
    Since $ S_{n}\cdots S_{1} $ is quasi-invertible, $ \mathcal{N}(\lvert S_{n}\cdots S_{1}\rvert) = \{0\} $, which implies that\linebreak $ \overline{\mathcal{R}(\lvert S_{n}\cdots S_{1}\rvert)} = H $. It follows from \eqref{ProofFormModulusOfShift} that $ W_{n,i}^{\ast}U_{i+n,n}V_{n} = U_{i,0} $; using the fact that $ W_{n,i} $ is unitary, it can be written equivalently as
    \begin{equation}
        \label{ProofFormEntriesOfUnitaryPositiveColumns}
        U_{i+n,n} = W_{n,i}U_{i,0}V_{n}^{\ast}, \qquad n\in \mathbb{N}_{1}, \ i\in \mathbb{Z}.
    \end{equation}
    From \eqref{ProofFormEntriesOfUnitaryPositiveColumns} and \eqref{FormDiagonalOperatorDef} we can derive that
    \begin{equation}
        \label{ProofFormEntriesOfUnitaryPositiveColumnsByZeroColumn}
        U_{n} = F^{n}D[(W_{n,i})_{i\in \mathbb{Z}}]U_{0}V_{n}^{\ast}, \qquad n\in \mathbb{N}_{1}.
    \end{equation}
    Now we prove similar formula for $ U_{-n} $ ($ n\in \mathbb{N}_{1} $); we provide the whole reasoning, because it differs from the above in some details. Again, by Lemma \ref{LemInterweavingWeightedShifts} we have
    \begin{equation}
        \label{ProofFormShiftInterweavingNegative}
        U_{i-n,-n}(S_{-n+1}^{\ast}\cdots S_{0}^{\ast}) = T_{i-n+1}^{\ast}\cdots T_{i}^{\ast}U_{i,0}, \qquad n\in \mathbb{N}_{1}, \ i\in \mathbb{Z}.
    \end{equation}
    For $ n\in \mathbb{N}_{1} $ and $ i\in \mathbb{Z} $ let
    \begin{align}
        \label{ProofFormPolarDecompositionNegativeS}
        S_{0}\cdots S_{-n+1} &= V_{-n}\lvert S_{0}\cdots S_{-n+1}\rvert\\
        \label{ProofFormPolarDecompositionNegativeT}
        T_{i}\cdots T_{i-n+1} &= W_{-n,i}\lvert T_{i}\cdots T_{i-n+1}\rvert
    \end{align} be the polar decompositions. As before, since $ S_{0}\cdots S_{-n+1} $ and $ T_{i}\cdots T_{i-n+1} $ are quasi-invertible, partial isometries $ V_{-n} $ and $ W_{-n,i} $ ($ n\in \mathbb{N} $, $ i\in \mathbb{Z} $) are unitary operators. We infer from \cite[Exercise 7.26(c)]{Wei1} that for every $ n\in \mathbb{N}_{1} $ and $ i\in \mathbb{Z} $,
    \begin{align}
        \label{ProofFormPolarDecompAdjointS}
        (S_{0}\cdots S_{-n+1})^{\ast} &= S_{-n+1}^{\ast}\cdots S_{0}^{\ast} = V_{-n}^{\ast}\lvert S_{-n+1}^{\ast}\cdots S_{0}^{\ast}\rvert\\
        \label{ProofFormPolarDecompAdjointT}
        (T_{i}\cdots T_{i-n+1})^{\ast} &= T_{i-n+1}^{\ast}\cdots T_{i}^{\ast} = W_{-n,i}^{\ast} \lvert T_{i-n+1}^{\ast}\cdots T_{i}^{\ast}\rvert
    \end{align}
    are the polar decompositions of $ (S_{0}\cdots S_{-n+1})^{\ast} $ and $ (T_{i}\cdots T_{i-n+1})^{\ast} $, respectively. We have
    \begin{align*}
        &W_{-n,i}^{\ast}\lvert T_{i-n+1}^{\ast}\cdots T_{i}^{\ast}\rvert U_{i,0} \stackrel{\eqref{ProofFormPolarDecompAdjointT}}{=} T_{i-n+1}^{\ast}\cdots T_{i}^{\ast} U_{i,0}\\
        &\stackrel{\eqref{ProofFormShiftInterweavingNegative}}{=} U_{i-n,-n}S_{-n+1}^{\ast}\cdots S_{0}^{\ast} \stackrel{\eqref{ProofFormPolarDecompAdjointS}}{=} U_{i-n,-n}V_{-n}^{\ast}\lvert S_{-n+1}^{\ast}\cdots S_{0}^{\ast}\rvert, \qquad n\in \mathbb{N}_{1}, \ i\in \mathbb{Z}.
    \end{align*}
    Thus, for every $ n\in \mathbb{N}_{1}, \ i\in \mathbb{Z} $,
    \begin{equation*}
        W_{-n,i}U_{i-n,-n}V_{-n}^{\ast}\lvert S_{-n+1}^{\ast}\cdots S_{0}^{\ast}\rvert = \lvert T_{i-n+1}^{\ast}\cdots T_{i}^{\ast}\rvert U_{i,0} \stackrel{\eqref{ProofFormModulusOfAdjoint}}{=} U_{i,0}\lvert S_{0-n+1}^{\ast}\cdots S_{0}^{\ast}\rvert.
    \end{equation*}
    Since $ \overline{\mathcal{R}(\lvert S_{-n+1}^{\ast}\cdots S_{0}^{\ast}\rvert)} = H $, by \eqref{ProofFormModulusOfAdjoint} we have $ W_{-n, i}U_{i-n,-n}V_{-n}^{\ast} = U_{i,0} $. Equivalently,
    \begin{equation*}
        U_{i-n,-n} = W_{-n,i}^{\ast}U_{i,0}V_{-n}, \qquad n\in \mathbb{N}_{1}, \ i\in \mathbb{Z}.
    \end{equation*}
    Hence,
    \begin{equation}
        \label{ProofFormEntriesOfUnitaryNegativeColumnsByZeroColumn}
        U_{-n} = F^{-n}D[(W_{-n,i}^{\ast})_{i\in \mathbb{Z}}]U_{0}V_{-n}, \qquad n\in \mathbb{N}_{1}.
    \end{equation}
    Now we will prove $ \eqref{FormWanderingProperty} $. First, observe that if $ k\in \mathbb{N}_{1} $, then
    \begin{align}
        \notag &U_{k}S_{k}\cdots S_{1} \stackrel{\eqref{ProofFormEntriesOfUnitaryPositiveColumnsByZeroColumn}}{=} F^{k}D[(W_{k,i})_{i\in \mathbb{Z}}] U_{0}V_{k}^{\ast} S_{k}\cdots S_{1} \\
        \notag&= F^{k}D[(W_{k,i})_{i\in \mathbb{Z}}] U_{0}\lvert S_{k}\cdots S_{1}\rvert \\
        \notag&\stackrel{\eqref{ProofFormModulusOfShift}}{=} F^{k}D[(W_{k,i})_{i\in \mathbb{Z}}]D[(\lvert T_{i+k}\cdots T_{i+1}\rvert)_{i\in \mathbb{Z}}]U_{0}\\
        \label{ProofFormEntriesOfUnitaryForDenseSubsetPositive}
        \notag&\stackrel{\eqref{ProofFormPolarDecompositionPositiveT}}{=} F^{k}D[(T_{i+k}\cdots T_{i+1})_{i\in \mathbb{Z}}]U_{0} \\
        &\stackrel{\text{Lemma }\ref{LemBasicPropertiesOfOperatorShifts}}{=} T^{k}U_{0}.
    \end{align}
    In turn, if $ -k\in \mathbb{N}_{1} $, then
    \begin{align}
        \notag &U_{k}S_{k+1}^{\ast}\cdots S_{0}^{\ast} \stackrel{\eqref{ProofFormEntriesOfUnitaryNegativeColumnsByZeroColumn}}{=} F^{k}D[(W_{k,i}^{\ast})_{i\in \mathbb{Z}}]U_{0}V_{k}S_{k+1}^{\ast}\cdots S_{0}^{\ast}\\
        \notag&= F^{k}D[(W_{k,i}^{\ast})_{i\in \mathbb{Z}}]U_{0}\lvert S_{k+1}^{\ast}\cdots S_{0}^{\ast}\rvert\\
        \notag &\stackrel{\eqref{ProofFormModulusOfAdjoint}}{=} F^{k}D[(W_{k,i}^{\ast})_{i\in \mathbb{Z}}]D[(\lvert T_{i+k+1}^{\ast}\cdots T_{i}^{\ast}\rvert)_{i\in \mathbb{Z}}]U_{0}\\
        \label{ProofFormEntriesOfUnitaryForDenseSubsetNegative}
        \notag&\stackrel{\eqref{ProofFormPolarDecompAdjointT}}{=} F^{k}D[(T_{i+k+1}^{\ast}\cdots T_{i}^{\ast})_{i\in \mathbb{Z}}]U_{0} \\
        &\stackrel{\text{Lemma }\ref{LemBasicPropertiesOfOperatorShifts}}{=} T^{\ast -k} U_{0}.
    \end{align}
    Since $ Ux^{(k)} = U_{k}x $ for all $ k\in \mathbb{Z} $ and $ x\in H $, we obtain that $ \mathcal{R}(U_{k})\perp \mathcal{R}(U_{n}) $ for all $ k,n\in \mathbb{Z} $, $ n\not= k $. In particular, for all $ k,m\in \mathbb{Z} $, $ k\not=m $, $ \langle U_{k}\widehat{x_{k}}, U_{m}\widehat{x_{m}}\rangle = 0 $, where
    \begin{equation*}
        \widehat{x_{j}} = \begin{cases}
            S_{j}\cdots S_{1} x_{j}, & \text{if } j > 0\\
            x_{0}, & \text{if } j = 0\\
            S_{j+1}^{\ast}\cdots S_{0}^{\ast}x_{j}, & \text{if } j <0 
        \end{cases} \qquad j=k,m,
    \end{equation*}
    and $ x_{k},x_{m}\in H $. But from the above computation it follows that $ U_{j}\widehat{x_{j}} = T^{[j]}U_{0}x_{j} $, $ j=k,m $. Hence, \eqref{FormWanderingProperty} holds.\par Finally, since $ U(x_{i})_{i\in \mathbb{Z}} = \sum_{i\in \mathbb{Z}} U_{i}x_{i} $, we have
    \begin{equation}
        \label{ProofFormLinearSpanOfEntries}
        \ell^{2}(\mathbb{Z},H) = \bigvee_{i\in \mathbb{Z}} \mathcal{R}(U_{i}).
    \end{equation}
    By the fact that $ \overline{\mathcal{R}(S_{n}\cdots S_{1})} = H $ and $ \overline{\mathcal{R}(S_{-n+1}^{\ast}\cdots S_{0}^{\ast})} = H $ for $ n\in \mathbb{N}_{1} $, we obtain from \eqref{ProofFormEntriesOfUnitaryPositiveColumnsByZeroColumn} and \eqref{ProofFormEntriesOfUnitaryNegativeColumnsByZeroColumn} that $ \mathcal{R}(U_{n}) = \overline{\mathcal{R}(U_{n}S_{n}\cdots S_{1})} $ and $ \mathcal{R}(U_{-n}) =\linebreak \overline{\mathcal{R}(U_{-n}S_{-n+1}^{\ast}\cdots S_{0}^{\ast})} $ for all $ n\in \mathbb{N}_{1} $. Combining the above and the equalities \eqref{ProofFormEntriesOfUnitaryForDenseSubsetPositive} and \eqref{ProofFormEntriesOfUnitaryForDenseSubsetNegative} we obtain that $ \mathcal{R}(U_{n}) = \overline{\mathcal{R}(T^{[n]}U_{0})} $ for all $ n\in \mathbb{Z} $. Hence, \eqref{ProofFormLinearSpanOfEntries} takes the form
    \begin{equation*}
        \ell^{2}(\mathbb{Z},H) = \bigvee_{i\in \mathbb{Z}} \overline{\mathcal{R}(T^{[i]}U_{0})} = \bigvee_{i\in \mathbb{Z}} \mathcal{R}(T^{[i]}U_{0}).
    \end{equation*}
    Therefore, \eqref{FormClosedSpan} holds.\\
    (ii). For $ n\in \mathbb{N}_{1} $ let $ U_{n} $ be defined by \eqref{ProofFormEntriesOfUnitaryPositiveColumnsByZeroColumn} and let $ U_{-n} $ be defined by \eqref{ProofFormEntriesOfUnitaryNegativeColumnsByZeroColumn}, where $ U_{0} $ is as in (ii). Since $ U_{0} $ is an isometry, so is $ U_{n} $ for every $ n\in \mathbb{Z} $. By the fact that $ \overline{\mathcal{R}(S_{n}\cdots S_{1})} = H $ and $ \overline{\mathcal{R}(S_{-n+1}^{\ast}\cdots S_{0}^{\ast})} = H $, using \eqref{ProofFormEntriesOfUnitaryForDenseSubsetPositive} and \eqref{ProofFormEntriesOfUnitaryForDenseSubsetNegative}, we obtain from \eqref{FormWanderingProperty} and \eqref{FormClosedSpan} that $ \mathcal{R}(U_{k})\perp \mathcal{R}(U_{n}) $ for all $ n,k\in \mathbb{Z} $, $ n\not=k $, and that
    \begin{equation}
        \label{ProofFormClosedSpan}
        \ell^{2}(\mathbb{Z},H) =\bigvee_{i\in \mathbb{Z}}\mathcal{R}(U_{i}).
    \end{equation}
    Define
    \begin{equation*}
        U\!: \ell^{2}(\mathbb{Z},H) \ni (x_{i})_{i\in \mathbb{Z}}\longmapsto \sum_{i\in \mathbb{Z}}U_{i}x_{i} \in \ell^{2}(\mathbb{Z},H). 
    \end{equation*}
    Since all $ U_{n} $'s ($ n\in \mathbb{Z} $) are isometries with mutually orthogonal  ranges, $ U $ is also isometric. By \eqref{ProofFormClosedSpan}, $ U $ is unitary. It remains to show that $ US = TU $. It is enough to prove that
    \begin{equation}
        \label{ProofFormShiftInterweavingOnEveryCoordinate}
        USx^{(k)} = TUx^{(k)}, \qquad x\in H, \ k\in \mathbb{Z}.
    \end{equation}
    First, we check \eqref{ProofFormShiftInterweavingOnEveryCoordinate} for $ k = 0 $. In this case we have
    \begin{align*}
        USx^{(0)} &= U(S_{1}x)^{(1)} = U_{1}S_{1}x \stackrel{\eqref{ProofFormEntriesOfUnitaryPositiveColumnsByZeroColumn}}{=} FD[(W_{1,i})_{i\in \mathbb{Z}}]U_{0}V_{1}^{\ast}S_{1}x\\
        &\stackrel{\eqref{ProofFormPolarDecompositionPositiveS}}{=}FD[(W_{1,i})_{i\in \mathbb{Z}}]U_{0}\lvert S_{1}\rvert x \\
        &\stackrel{\eqref{FormPositiveWeights}}{=} F[(W_{1,i})_{i\in \mathbb{Z}}]D[(\lvert T_{i+1}\rvert)_{i\in \mathbb{Z}}]U_{0}x \\
        &\stackrel{\eqref{ProofFormPolarDecompositionPositiveT}}{=} FD[(T_{i+1})_{i\in \mathbb{Z}}]U_{0}x = TU_{0}x = TUx^{(0)}.
    \end{align*}
    Next, assume $ k\in \mathbb{N}_{1} $. Since $ \overline{\mathcal{R}(S_{k}\cdots S_{1})} = H $, it is enough to check \eqref{ProofFormShiftInterweavingOnEveryCoordinate} for vectors of the form $ S_{k}\cdots S_{1}x $, where $ x\in H $. We have
    \begin{align*}
        &US(S_{k}\cdots S_{1}x)^{(k)} = U(S_{k+1}\cdots S_{1}x)^{(k+1)} = U_{k+1}S_{k+1}\cdots S_{1}x\\
        &\stackrel{\eqref{ProofFormEntriesOfUnitaryForDenseSubsetPositive}}{=} T^{k+1}U_{0}x = TT^{k}U_{0}x \stackrel{\eqref{ProofFormEntriesOfUnitaryForDenseSubsetPositive}}{=} TU_{k}S_{k}\cdots S_{1}x\\
        &= TU(S_{k}\cdots S_{1}x)^{(k)}.
    \end{align*}
    Now suppose that $ -k\in \mathbb{N}_{1} $. Note that it is enough to verify that
    \begin{equation}
        \label{ProofFormCoordinateInterwavingAdjoints}
        S_{k+1}^{\ast}U_{k+1}^{\ast} = U_{k}^{\ast}T^{\ast}.
    \end{equation}
    Indeed, if \eqref{ProofFormCoordinateInterwavingAdjoints} holds, then by taking adjoints we obtain $ U_{k+1}S_{k+1} = TU_{k} $. But $ U_{k+1}S_{k+1}x = U(S_{k+1}x)^{(k+1)} = USx^{(k)} $ and $ TUx^{(k)} = TU_{k}x $ for all $ x\in H $. Again, since $ \overline{\mathcal{R}(T_{i+1}^{\ast}\cdots T_{i-k-1}^{\ast})} = H $ for all $ i\in \mathbb{Z} $, it suffices to verify \eqref{ProofFormCoordinateInterwavingAdjoints} for vectors of the form $ (T_{i+1}^{\ast}\cdots T_{i-k-1}^{\ast}x_{i-k-1})_{i\in \mathbb{Z}} $, where $ (x_{i})_{i\in \mathbb{Z}}\in \ell^{2}(\mathbb{Z},H) $. For such a vector we have
    \begin{align*}
        &U_{k}^{\ast}T^{\ast}(T_{i+1}^{\ast}\cdots T_{i-k-1}^{\ast}x_{i-k-1})_{i\in \mathbb{Z}} \stackrel{\text{Lemma }\ref{LemBasicPropertiesOfOperatorShifts}}{=} U_{k}^{\ast}(T_{i+1}^{\ast}\cdots T_{i-k}^{\ast}x_{i-k})_{i\in \mathbb{Z}}\\
        &\stackrel{\eqref{ProofFormEntriesOfUnitaryNegativeColumnsByZeroColumn}}{=} V_{k}^{\ast}U_{0}^{\ast}D[(W_{k,i})_{i\in \mathbb{Z}}]F^{-k}(T_{i+1}^{\ast}\cdots T_{i-k}^{\ast}x_{i-k})_{i\in \mathbb{Z}} \\
        &= V_{k}^{\ast}U_{0}^{\ast}D[(W_{k,i})_{i\in \mathbb{Z}}](T_{i+k+1}^{\ast}\cdots T_{i}^{\ast}x_{i})_{i\in \mathbb{Z}} \\
        &\stackrel{\eqref{ProofFormPolarDecompAdjointT}}{=} V_{k}^{\ast}U_{0}^{\ast}(\lvert T_{i+k+1}^{\ast}\cdots T_{i}^{\ast}\rvert x_{i} )_{i\in \mathbb{Z}}\\
        &\stackrel{\eqref{FormNegativeWeights}}{=} V_{k}^{\ast}\lvert S_{k+1}^{\ast}\cdots S_{0}^{\ast}\rvert U_{0}^{\ast}(x_{i})_{i\in \mathbb{Z}} \\
        &= (S_{k+1}^{\ast}\cdots S_{0}^{\ast})U_{0}^{\ast}(x_{i})_{i\in \mathbb{Z}}
    \end{align*}
    and
    \begin{align*}
        &S_{k+1}^{\ast}U_{k+1}^{\ast}(T_{i+1}^{\ast}\cdots T_{i-k-1}^{\ast}x_{i-k-1})_{i\in \mathbb{Z}} \\
        &\stackrel{\eqref{ProofFormEntriesOfUnitaryNegativeColumnsByZeroColumn}}{=} S_{k+1}^{\ast}V_{k+1}^{\ast}U_{0}^{\ast}D[(W_{k+1,i})_{i\in \mathbb{Z}}]F^{-(k+1)}(T_{i+1}^{\ast}\cdots T_{i-k-1}^{\ast}x_{i-k-1})_{i\in \mathbb{Z}}\\
        &=S_{k+1}^{\ast}V_{k+1}^{\ast}U_{0}^{\ast}D[(W_{k+1,i})_{i\in \mathbb{Z}}](T_{i+k+2}^{\ast}\cdots T_{i}^{\ast}x_{i})_{i\in \mathbb{Z}} \\
        &\stackrel{\eqref{ProofFormPolarDecompAdjointT}}{=} S_{k+1}^{\ast}V_{k+1}^{\ast}U_{0}^{\ast}(\lvert T_{i+k+2}^{\ast}\cdots T_{i}^{\ast}\rvert x_{i})_{i\in \mathbb{Z}}\\
        &\stackrel{\eqref{FormNegativeWeights}}{=}S_{k+1}^{\ast}V_{k+1}^{\ast}\lvert S_{k+2}^{\ast}\cdots S_{0}^{\ast}\rvert U_{0}^{\ast}(x_{i})_{i\in \mathbb{Z}} \\
        &= S_{k+1}^{\ast}S_{k+2}^{\ast}\cdots S_{0}^{\ast}U_{0}^{\ast}(x_{i})_{i\in \mathbb{Z}}.
    \end{align*}
    Hence, \eqref{ProofFormCoordinateInterwavingAdjoints} holds, what completes the proof.
\end{proof}
\begin{remark}
    \label{RemFormulaForUnitaryOperator}
    The careful look on the proof of Theorem \ref{ThmGeneralCharacterizationOfUnitaryEquivalence} reveals that all entries in the matrix representation of the unitary operator $ U $ making $ S $ and $ T $ unitarily equivalent are uniquely determined by $ U_{0} $ and that the isometry $ U_{0} $ in (ii) is exactly the zeroth column of the unitary operator we construct. The reader can easily verify that the choice of the zeroth column is arbitrary; we may prove that all entries of $ U $ are uniquely determined by any other fixed column as well.
\end{remark}
As a corollary we obtain the counterpart of \cite[Corollary 3.2]{Lam3} for bilateral shifts.
\begin{corollary}
    \label{CorUnitaryEquivalenceShiftsWithUnitaryWeights}
    Let $ H $ be a Hilbert space. Let $ (S_{i})_{i\in \mathbb{Z}}, (T_{i})_{i\in \mathbb{Z}} \subset \mathbf{B}(H) $ be two sequences of unitary operators on $ H $ and let $ S,T $ be the bilateral weighted shifts with weights $ (S_{i})_{i\in \mathbb{Z}} $ and $ (T_{i})_{i\in \mathbb{Z}} $, respectively. Then $ S $ and $ T $ are unitarily equivalent.
\end{corollary}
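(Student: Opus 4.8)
The plan is to deduce the result from part (ii) of Theorem~\ref{ThmGeneralCharacterizationOfUnitaryEquivalence}. Since every unitary operator is invertible, it is in particular quasi-invertible, so both weight sequences satisfy the standing hypothesis of the theorem. The crucial observation is that unitary weights make the modulus conditions trivial: any finite product of the $S_i$'s (or of their adjoints) is again unitary, hence $\lvert S_n\cdots S_1\rvert = I$ and $\lvert S_{-n+1}^\ast\cdots S_0^\ast\rvert = I$, and likewise $\lvert T_{i+n}\cdots T_{i+1}\rvert = I$ and $\lvert T_{i-n+1}^\ast\cdots T_i^\ast\rvert = I$ for all $i\in\mathbb{Z}$, $n\in\mathbb{N}_1$. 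Consequently \eqref{FormPositiveWeights} and \eqref{FormNegativeWeights} both collapse to the identity $U_0 = U_0$ and hold for every isometry $U_0$. Thus I only need to exhibit one isometry $U_0$ satisfying the wandering and spanning conditions \eqref{FormWanderingProperty} and \eqref{FormClosedSpan}.

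The natural candidate is the embedding into the zeroth coordinate, $U_0 x = x^{(0)}$, which is clearly an isometry whose range is the zeroth copy of $H$ in $\ell^2(\mathbb{Z},H)$. Using Lemma~\ref{LemBasicPropertiesOfOperatorShifts}, I would compute $T^{[k]}x^{(0)}$ for each $k\in\mathbb{Z}$: for $k\in\mathbb{N}_1$ one gets $T^k x^{(0)} = (T_k\cdots T_1 x)^{(k)}$, for $k=0$ one gets $x^{(0)}$, and for $-k\in\mathbb{N}_1$ one gets $T^{\ast -k}x^{(0)} = (T_{k+1}^\ast\cdots T_0^\ast x)^{(k)}$. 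In every case $T^{[k]}x^{(0)}$ is supported on the single coordinate $k$.

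From this the two remaining conditions follow at once. Vectors supported on distinct coordinates are orthogonal, so $T^{[k]}(\mathcal{R}(U_0))\perp T^{[m]}(\mathcal{R}(U_0))$ whenever $k\ne m$, giving \eqref{FormWanderingProperty}. Moreover, each intertwining factor $T_k\cdots T_1$ (resp.\ $T_{k+1}^\ast\cdots T_0^\ast$) is a product of unitaries, hence surjective, so $\mathcal{R}(T^{[k]}U_0)$ is the \emph{entire} $k$-th coordinate copy of $H$; taking the closed linear span over $k\in\mathbb{Z}$ recovers all of $\ell^2(\mathbb{Z},H)$, which is \eqref{FormClosedSpan}. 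Applying Theorem~\ref{ThmGeneralCharacterizationOfUnitaryEquivalence}(ii) then yields the unitary equivalence of $S$ and $T$.

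I anticipate no serious obstacle here; the only points demanding care are the index bookkeeping in identifying the coordinate on which $T^{[k]}x^{(0)}$ lives and the surjectivity of the weight products. As an alternative route that sidesteps the theorem, I note that the intertwiner can be built by hand: seeking a diagonal unitary $D[(A_i)_{i\in\mathbb{Z}}]$ with $D[(A_i)_{i\in\mathbb{Z}}]\,S = F\,D[(A_i)_{i\in\mathbb{Z}}]$ leads to the recursion $A_{i-1} = A_i S_i$, which, starting from $A_0 = I$, produces a unitary $A_i$ for every $i$; this shows $S$ is unitarily equivalent to $F$, and the same reasoning applies to $T$, so $S$ and $T$ are unitarily equivalent by transitivity.
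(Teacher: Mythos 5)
Your main argument is correct and coincides with the paper's own proof: the paper also takes $U_{0}x = x^{(0)}$ and applies Theorem \ref{ThmGeneralCharacterizationOfUnitaryEquivalence}.(ii), simply leaving as \enquote{routine} the verification you spelled out, and your bookkeeping is accurate --- $\lvert S_{n}\cdots S_{1}\rvert = I$ and $\lvert T_{i+n}\cdots T_{i+1}\rvert = I$ (and likewise for the adjoint products) trivialize \eqref{FormPositiveWeights} and \eqref{FormNegativeWeights}, while $T^{k}x^{(0)} = (T_{k}\cdots T_{1}x)^{(k)}$ and $T^{\ast -k}x^{(0)} = (T_{k+1}^{\ast}\cdots T_{0}^{\ast}x)^{(k)}$ give \eqref{FormWanderingProperty} and, via surjectivity of the unitary products, \eqref{FormClosedSpan}. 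Your appended alternative is a genuinely different and more elementary route the paper does not take: the recursion $A_{i-1} = A_{i}S_{i}$ with $A_{0} = I$ produces unitaries $A_{i}$ (so the sequence is automatically uniformly bounded), and $D[(A_{i})_{i\in\mathbb{Z}}]S = FD[(A_{i})_{i\in\mathbb{Z}}]$ shows each shift is unitarily equivalent to $F$ by a \emph{diagonal} unitary, whence $S$ and $T$ are equivalent by transitivity without invoking Theorem \ref{ThmGeneralCharacterizationOfUnitaryEquivalence} at all; the theorem-based route buys consistency with the paper's general framework (and illustrates condition-checking for Theorem \ref{ThmGeneralCharacterizationOfUnitaryEquivalence}.(ii)), whereas your direct construction is self-contained and yields the stronger conclusion that the equivalence can be realized in diagonal form.
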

\begin{proof}
    Define $ U_{0}\!: H\to \ell^{2}(\mathbb{Z},H) $ by
    \begin{equation*}
        U_{0}x = x^{(0)}, \qquad x\in H.
    \end{equation*}
    It is a matter of routine to verify that \eqref{FormPositiveWeights}-\eqref{FormClosedSpan} hold, so we can apply Theorem \ref{ThmGeneralCharacterizationOfUnitaryEquivalence}.(ii).
\end{proof}
It turns out that under some additional assumptions on weights, we can simplify the conditions in Theorem \ref{ThmGeneralCharacterizationOfUnitaryEquivalence}.
\begin{corollary}
    \label{CorUnitaryEquivalenceShiftsWithDoublyCommutingWeights}
    Suppose that $ S $ and $ T $ are as in Theorem \ref{ThmGeneralCharacterizationOfUnitaryEquivalence} with additional assumption that
    the sequences $ (S_{i})_{i\in \mathbb{Z}} $ and $ (T_{i})_{i\in \mathbb{Z}} $ are doubly commuting. 
    \begin{enumerate}
        \item if $ US = TU $ for a unitary operator $ U\in \mathbf{B}(\ell^{2}(\mathbb{Z},H)) $, then the isometry $ U_{0}\in \mathbf{B}(H,\ell^{2}(\mathbb{Z},H)) $ as in Theorem \ref{ThmGeneralCharacterizationOfUnitaryEquivalence}.(i) satisfies
        \begin{align}
            \label{FormPositiveWeightsDoublyCommuting}
            U_{0}\lvert S_{n}\rvert = D[(\lvert T_{i+n}\rvert)_{i\in \mathbb{Z}}]U_{0}, \qquad n\in \mathbb{N}_{1},\\
            \label{FormNegativeWeightsDoublyCommuting}
            U_{0}\lvert S_{-n+1}^{\ast}\rvert = D[(\lvert T_{i-n+1}^{\ast}\rvert)_{i\in \mathbb{Z}}]U_{0}, \qquad n\in \mathbb{N}_{1}.
        \end{align}
        \item If there exists an isometry $ U_{0}\in \mathbf{B}(H,\ell^{2}(\mathbb{Z},H)) $ satisfying \eqref{FormPositiveWeightsDoublyCommuting}, \eqref{FormNegativeWeightsDoublyCommuting}, \eqref{FormWanderingProperty} and \eqref{FormClosedSpan}, then $ S $ and $ T $ are unitarily equivalent.
    \end{enumerate}
\end{corollary}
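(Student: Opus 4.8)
The plan is to reduce Corollary \ref{CorUnitaryEquivalenceShiftsWithDoublyCommutingWeights} to Theorem \ref{ThmGeneralCharacterizationOfUnitaryEquivalence} by isolating one algebraic consequence of double commutativity: if $ A_{1},\dots,A_{m}\in \mathbf{B}(H) $ are pairwise doubly commuting, then
\begin{equation*}
    \lvert A_{m}\cdots A_{1}\rvert = \lvert A_{m}\rvert\cdots\lvert A_{1}\rvert,
\end{equation*}
and the positive factors $ \lvert A_{1}\rvert,\dots,\lvert A_{m}\rvert $ pairwise commute. I would prove this first. For two operators $ A,B $ one computes $ \lvert AB\rvert^{2}=B^{\ast}A^{\ast}AB $; double commutativity gives $ A^{\ast}AB=A^{\ast}BA=BA^{\ast}A $, so $ \lvert A\rvert^{2} $ commutes with $ B $, $ B^{\ast} $ and hence with $ \lvert B\rvert^{2} $, whence $ \lvert AB\rvert^{2}=\lvert A\rvert^{2}\lvert B\rvert^{2}=(\lvert A\rvert\lvert B\rvert)^{2} $ with $ \lvert A\rvert\lvert B\rvert\ge 0 $; uniqueness of the positive square root yields $ \lvert AB\rvert=\lvert A\rvert\lvert B\rvert $. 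The general case follows by induction, since $ A_{m}\cdots A_{1} $ remains doubly commuting with each individual factor. Applied to the weights this gives $ \lvert S_{n}\cdots S_{1}\rvert=\lvert S_{n}\rvert\cdots\lvert S_{1}\rvert $; because $ (S_{i}^{\ast})_{i\in\mathbb{Z}} $ is doubly commuting as well, it also gives $ \lvert S_{-n+1}^{\ast}\cdots S_{0}^{\ast}\rvert=\lvert S_{-n+1}^{\ast}\rvert\cdots\lvert S_{0}^{\ast}\rvert $, with all factors commuting, and the analogous identities hold for the $ T_{i} $.

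For part (i) I would start from the conclusions \eqref{FormPositiveWeights}--\eqref{FormNegativeWeights} of Theorem \ref{ThmGeneralCharacterizationOfUnitaryEquivalence}.(i), which $ U_{0} $ satisfies. Writing $ P_{n}=\lvert S_{n}\cdots S_{1}\rvert=\lvert S_{n}\rvert P_{n-1} $ (with $ P_{0}=I $) and $ \tilde Q_{n}=D[(\lvert T_{i+n}\cdots T_{i+1}\rvert)_{i\in\mathbb{Z}}]=D[(\lvert T_{i+n}\rvert)_{i\in\mathbb{Z}}]\tilde Q_{n-1} $ by the factorization above, relation \eqref{FormPositiveWeights} reads $ U_{0}P_{n}=\tilde Q_{n}U_{0} $, and then, using \eqref{FormPositiveWeights} at $ n-1 $ in the middle,
\begin{equation*}
    U_{0}\lvert S_{n}\rvert P_{n-1}=U_{0}P_{n}=\tilde Q_{n}U_{0}=D[(\lvert T_{i+n}\rvert)_{i\in\mathbb{Z}}]\tilde Q_{n-1}U_{0}=D[(\lvert T_{i+n}\rvert)_{i\in\mathbb{Z}}]U_{0}P_{n-1}.
\end{equation*}
Since each $ S_{i} $ is quasi-invertible, so is $ P_{n-1} $ by \eqref{FormQuasiInvertibleClosedUnderProducts} for $ n\ge 2 $ (trivially for $ n=1 $), hence $ \overline{\mathcal{R}(P_{n-1})}=H $; thus the bounded operators $ U_{0}\lvert S_{n}\rvert $ and $ D[(\lvert T_{i+n}\rvert)_{i\in\mathbb{Z}}]U_{0} $ agree on a dense set, which gives \eqref{FormPositiveWeightsDoublyCommuting}. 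The identity \eqref{FormNegativeWeightsDoublyCommuting} is obtained identically from \eqref{FormNegativeWeights}, now peeling off the leftmost factor $ \lvert S_{-n+1}^{\ast}\rvert $ and invoking density of $ \mathcal{R}(\lvert S_{-n+2}^{\ast}\cdots S_{0}^{\ast}\rvert) $.

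For part (ii) I would run the implication in reverse and appeal to Theorem \ref{ThmGeneralCharacterizationOfUnitaryEquivalence}.(ii): given $ U_{0} $ satisfying \eqref{FormPositiveWeightsDoublyCommuting}, \eqref{FormNegativeWeightsDoublyCommuting}, \eqref{FormWanderingProperty} and \eqref{FormClosedSpan}, I recover \eqref{FormPositiveWeights} and \eqref{FormNegativeWeights}. Iterating \eqref{FormPositiveWeightsDoublyCommuting} factor by factor,
\begin{equation*}
    U_{0}\lvert S_{n}\rvert\cdots\lvert S_{1}\rvert=D[(\lvert T_{i+n}\rvert)_{i\in\mathbb{Z}}]U_{0}\lvert S_{n-1}\rvert\cdots\lvert S_{1}\rvert=\cdots=D[(\lvert T_{i+n}\rvert)_{i\in\mathbb{Z}}]\cdots D[(\lvert T_{i+1}\rvert)_{i\in\mathbb{Z}}]U_{0},
\end{equation*}
and since diagonal operators multiply entrywise the right-hand side equals $ D[(\lvert T_{i+n}\rvert\cdots\lvert T_{i+1}\rvert)_{i\in\mathbb{Z}}]U_{0}=D[(\lvert T_{i+n}\cdots T_{i+1}\rvert)_{i\in\mathbb{Z}}]U_{0} $; together with $ \lvert S_{n}\cdots S_{1}\rvert=\lvert S_{n}\rvert\cdots\lvert S_{1}\rvert $ this is exactly \eqref{FormPositiveWeights}. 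The same iteration turns \eqref{FormNegativeWeightsDoublyCommuting} into \eqref{FormNegativeWeights}, so $ U_{0} $ satisfies all of \eqref{FormPositiveWeights}--\eqref{FormClosedSpan} and Theorem \ref{ThmGeneralCharacterizationOfUnitaryEquivalence}.(ii) yields unitary equivalence of $ S $ and $ T $.

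I expect the only genuine content to be the factorization lemma $ \lvert A_{m}\cdots A_{1}\rvert=\lvert A_{m}\rvert\cdots\lvert A_{1}\rvert $ with commuting moduli; everything else is bookkeeping plus the density argument (guaranteed by quasi-invertibility) used to cancel the trailing factor $ P_{n-1} $ in part (i). The main points requiring care are treating the negative indices through the doubly commuting family $ (S_{i}^{\ast})_{i\in\mathbb{Z}} $ and keeping the diagonal index shifts $ i\mapsto i+n $ consistent, but no idea beyond the two-operator computation is needed.
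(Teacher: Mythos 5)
Your proposal is correct and takes essentially the same route as the paper: both proofs reduce the corollary to Theorem \ref{ThmGeneralCharacterizationOfUnitaryEquivalence} via the factorizations $\lvert S_{n}\cdots S_{1}\rvert = \lvert S_{n}\rvert\cdots\lvert S_{1}\rvert$ and $\lvert T_{i+n}\cdots T_{i+1}\rvert = \lvert T_{i+n}\rvert\cdots\lvert T_{i+1}\rvert$ for doubly commuting operators, peel off one factor using the identity at level $n-1$, and cancel the remaining factor by the density of $\mathcal{R}(\lvert S_{n-1}\rvert\cdots\lvert S_{1}\rvert)$ coming from quasi-invertibility (with the symmetric argument for the adjoint family in \eqref{FormNegativeWeightsDoublyCommuting}, and the reverse iteration for part (ii)). The only difference is cosmetic: the paper imports the factorization lemma from \cite{AnaChaJabSto1}, whereas you prove it from scratch, and your two-operator computation together with the induction step is correct.
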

\begin{proof}
    Since $ (S_{i})_{i\in \mathbb{Z}} $ and $ (T_{i})_{i\in\mathbb{Z}} $ are sequences of doubly commuting operators, by \cite[Lemma 2.1]{AnaChaJabSto1} we have
    \begin{align}
        \label{ProofFormModulusForDoublyCommutingPositiveWeights}
        &\lvert S_{n}\rvert\cdots \lvert S_{1}\rvert = \lvert S_{n}\cdots S_{1}\rvert, \qquad n\in \mathbb{N}_{1},\\
        \label{ProofFormModulusForDoublyCommutingPositiveWeights2}
        &\lvert T_{i+n}\cdots T_{i+1}\rvert = \lvert T_{i+n}\rvert\cdots \lvert T_{i+1}\rvert, \qquad n\in \mathbb{N}_{1}, \ i\in \mathbb{Z}
    \end{align}
    (i). By Theorem \ref{ThmGeneralCharacterizationOfUnitaryEquivalence}, the isometry $ U_{0}\!: H\to \ell^{2}(\mathbb{Z},H) $ satisfies \eqref{FormPositiveWeights}-\eqref{FormClosedSpan}. We check that \eqref{FormPositiveWeightsDoublyCommuting} and \eqref{FormNegativeWeightsDoublyCommuting} hold. By \eqref{ProofFormModulusForDoublyCommutingPositiveWeights} and \eqref{ProofFormModulusForDoublyCommutingPositiveWeights2}, \eqref{FormPositiveWeights} takes the form
    \begin{equation}
        \label{ProofFormPositiveWeightsForDoublyCommuting}
        U_{0}\lvert S_{n}\rvert\cdots \lvert S_{1}\rvert = D[(\lvert T_{i+n}\rvert)_{i\in \mathbb{Z}}]\cdots D[(\lvert T_{i+1}\rvert)]U_{0}, \qquad n\in \mathbb{N}_{1}.
    \end{equation}
    If $ n = 1 $, then \eqref{ProofFormPositiveWeightsForDoublyCommuting} coincides with \eqref{FormPositiveWeightsDoublyCommuting}. If $ n\in \mathbb{N}_{2} $, then, using \eqref{ProofFormPositiveWeightsForDoublyCommuting} with $ n $ replaced with $ n-1 $, we obtain
    \begin{equation*}
        U_{0}\lvert S_{n}\rvert\lvert S_{n-1}\rvert\cdots \lvert S_{1}\rvert = D[(\lvert T_{i+n}\rvert)_{i\in \mathbb{Z}}]U_{0}\lvert S_{n-1}\rvert\cdots \lvert S_{1}\rvert.
    \end{equation*}
    Since $ \overline{\mathcal{R}(\lvert S_{n-1}\rvert\cdots \lvert S_{1}\rvert)} = H $, the above equality implies \eqref{FormPositiveWeightsDoublyCommuting}. Similar reasoning shows that \eqref{FormNegativeWeights} implies \eqref{FormNegativeWeightsDoublyCommuting}. \\
    (ii). Arguing as in (i) we can prove that \eqref{FormPositiveWeights} and \eqref{FormNegativeWeights} can be derived from \eqref{FormPositiveWeightsDoublyCommuting} and \eqref{FormNegativeWeightsDoublyCommuting}.
\end{proof}
Corollary \ref{CorUnitaryEquivalenceShiftsWithDoublyCommutingWeights} gives (in particular) the characterization of unitary equivalence for classical bilateral weighted shifts (with scalar weights). However, in this case the proof is much simpler than the proof of Theorem \ref{ThmGeneralCharacterizationOfUnitaryEquivalence}; in particular, the unitary operator making two bilateral weighted shifts unitarily equivalent is always of diagonal form (see \cite[Theorem 1, p.53]{Shi1}).\par
Now, let us show how to deduce \cite[Corollary 2.4]{Kos1} from Theorem \ref{ThmGeneralCharacterizationOfUnitaryEquivalence}. 
\begin{corollary}
    \label{CorCharacterizationOfUnitaryEquivalenceByDiagonalForm}
    Suppose that $ S $ and $ T $ are as in Theorem \ref{ThmGeneralCharacterizationOfUnitaryEquivalence}. The following conditions are equivalent:
    \begin{enumerate}
        \item there exists a unitary operator $ U\in \mathbf{B}\left( \ell^{2}(\mathbb{Z},H) \right) $ of diagonal form such that $ US = TU $,
        \item there exist $ p\in \mathbb{Z} $ and a unitary operator $ U_{p,0}\in \mathbf{B}(H) $ such that
        \begin{align}
            \label{FormPositiveWeightsDiagonalForm}
            &\lVert S_{n}\cdots S_{1}v\rVert = \lVert T_{p+n}\cdots T_{p+1}U_{p,0}v\rVert, \qquad v\in H, \ n\in \mathbb{N}_{1}\\
            \label{FormNegativeWeightsDiagonalForm}
            &\lVert S_{-n+1}^{\ast}\cdots S_{0}^{\ast}v\rVert = \lVert T_{p-n+1}^{\ast}\cdots T_{p}^{\ast}U_{p,0}v\rVert, \qquad v\in H, \ n\in \mathbb{N}_{1}.
        \end{align}
    \end{enumerate}
\end{corollary}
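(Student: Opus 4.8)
The plan is to run everything through Theorem~\ref{ThmGeneralCharacterizationOfUnitaryEquivalence}, exploiting the single observation that a unitary of diagonal form has its zeroth column concentrated on one coordinate. Indeed, if $U=F^{p}D[(A_{i})_{i\in\mathbb{Z}}]$ is unitary, then $U^{\ast}U=D[(A_{i}^{\ast}A_{i})_{i\in\mathbb{Z}}]$ and $UU^{\ast}=F^{p}D[(A_{i}A_{i}^{\ast})_{i\in\mathbb{Z}}]F^{-p}$ force every $A_{i}$ to be unitary, and by \eqref{FormDiagonalOperatorDef} the isometry $U_{0}$ of Theorem~\ref{ThmGeneralCharacterizationOfUnitaryEquivalence}.(i) is $U_{0}x=Ux^{(0)}=(A_{0}x)^{(p)}$. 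Thus the pair $(p,U_{p,0})$ in (ii) will always be the diagonal index of $U$ together with its $(p,0)$ entry $A_{0}$, which is unitary. Throughout I will use that $\lVert Xv\rVert=\lVert\lvert X\rvert v\rVert$ for every $X\in\mathbf{B}(H)$ and the polarization identity, which recovers a bounded operator on a complex Hilbert space from its quadratic form.

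For (i)$\Rightarrow$(ii) I would apply Theorem~\ref{ThmGeneralCharacterizationOfUnitaryEquivalence}.(i) to obtain that $U_{0}=(A_{0}\,\cdot\,)^{(p)}$ satisfies \eqref{FormPositiveWeights}--\eqref{FormClosedSpan}. Since $U_{0}$ is supported on the single coordinate $p$ and $D[\,\cdot\,]$ acts coordinatewise, reading off \eqref{FormPositiveWeights} and \eqref{FormNegativeWeights} at that coordinate gives the operator identities $A_{0}\lvert S_{n}\cdots S_{1}\rvert=\lvert T_{p+n}\cdots T_{p+1}\rvert A_{0}$ and $A_{0}\lvert S_{-n+1}^{\ast}\cdots S_{0}^{\ast}\rvert=\lvert T_{p-n+1}^{\ast}\cdots T_{p}^{\ast}\rvert A_{0}$. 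Applying $\lVert\,\cdot\,\rVert$, using that $A_{0}$ is unitary and that $\lVert Xv\rVert=\lVert\lvert X\rvert v\rVert$, yields exactly \eqref{FormPositiveWeightsDiagonalForm} and \eqref{FormNegativeWeightsDiagonalForm} with $U_{p,0}=A_{0}$.

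For (ii)$\Rightarrow$(i) I would set $U_{0}x=(U_{p,0}x)^{(p)}$, which is an isometry because $U_{p,0}$ is unitary, and verify the four hypotheses of Theorem~\ref{ThmGeneralCharacterizationOfUnitaryEquivalence}.(ii). The step I expect to be the crux is upgrading the norm equalities \eqref{FormPositiveWeightsDiagonalForm}, \eqref{FormNegativeWeightsDiagonalForm} to the operator equalities \eqref{FormPositiveWeights}, \eqref{FormNegativeWeights}. Here I would polarize: \eqref{FormPositiveWeightsDiagonalForm} says $\langle\lvert S_{n}\cdots S_{1}\rvert^{2}v,v\rangle=\langle U_{p,0}^{\ast}\lvert T_{p+n}\cdots T_{p+1}\rvert^{2}U_{p,0}v,v\rangle$ for all $v$, hence $\lvert S_{n}\cdots S_{1}\rvert^{2}=U_{p,0}^{\ast}\lvert T_{p+n}\cdots T_{p+1}\rvert^{2}U_{p,0}$; since $U_{p,0}\lvert S_{n}\cdots S_{1}\rvert U_{p,0}^{\ast}$ is positive and squares to $\lvert T_{p+n}\cdots T_{p+1}\rvert^{2}$, uniqueness of the positive square root gives $U_{p,0}\lvert S_{n}\cdots S_{1}\rvert=\lvert T_{p+n}\cdots T_{p+1}\rvert U_{p,0}$, which is \eqref{FormPositiveWeights} read at coordinate $p$; the adjoint products are handled identically for \eqref{FormNegativeWeights}.

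The remaining two conditions should come for free from the single-coordinate support together with quasi-invertibility. By Lemma~\ref{LemBasicPropertiesOfOperatorShifts} the operator $T^{[k]}$ maps coordinate $p$ into coordinate $p+k$, so the subspaces $T^{[k]}(\mathcal{R}(U_{0}))$ live in pairwise distinct coordinates and \eqref{FormWanderingProperty} is immediate. For \eqref{FormClosedSpan} I would note that $T^{[k]}U_{0}$ equals, at coordinate $p+k$, the operator $T_{p+k}\cdots T_{p+1}U_{p,0}$ for $k\ge1$ (respectively $T_{p+k+1}^{\ast}\cdots T_{p}^{\ast}U_{p,0}$ for $k\le-1$, and $U_{p,0}$ for $k=0$); each is quasi-invertible by \eqref{FormQuasiInvertibleClosedUnderProducts} together with the facts that adjoints and unitary factors preserve quasi-invertibility, so its range is dense in the $(p+k)$-th copy of $H$, and letting $k$ run over $\mathbb{Z}$ recovers all of $\ell^{2}(\mathbb{Z},H)$. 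Theorem~\ref{ThmGeneralCharacterizationOfUnitaryEquivalence}.(ii) then produces a unitary $U$ with $US=TU$; finally, feeding the single-coordinate $U_{0}$ into the column formulas \eqref{ProofFormEntriesOfUnitaryPositiveColumnsByZeroColumn} and \eqref{ProofFormEntriesOfUnitaryNegativeColumnsByZeroColumn} (cf.\ Remark~\ref{RemFormulaForUnitaryOperator}) shows the $n$-th column of $U$ is supported on coordinate $p+n$, i.e.\ only the $p$-th diagonal survives and $U=F^{p}D[(B_{i})_{i\in\mathbb{Z}}]$ is of diagonal form.
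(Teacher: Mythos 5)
Your proposal is correct and follows essentially the same route as the paper: both pass through Theorem~\ref{ThmGeneralCharacterizationOfUnitaryEquivalence} with the zeroth column $U_{0}x=(U_{p,0}x)^{(p)}$ supported on the single coordinate $p$, upgrade the norm identities \eqref{FormPositiveWeightsDiagonalForm}--\eqref{FormNegativeWeightsDiagonalForm} to the operator identities \eqref{FormPositiveWeights}--\eqref{FormNegativeWeights} via the quadratic form and uniqueness of the positive square root, and note that \eqref{FormWanderingProperty} and \eqref{FormClosedSpan} hold automatically from the single-coordinate support and quasi-invertibility. Your final step --- feeding $U_{0}$ into the column formulas \eqref{ProofFormEntriesOfUnitaryPositiveColumnsByZeroColumn} and \eqref{ProofFormEntriesOfUnitaryNegativeColumnsByZeroColumn} to confirm that the unitary produced by Theorem~\ref{ThmGeneralCharacterizationOfUnitaryEquivalence}.(ii) really is of diagonal form --- is even slightly more explicit than the paper, which leaves that verification implicit in Remark~\ref{RemFormulaForUnitaryOperator}.
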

\begin{proof}
    For $ p\in \mathbb{Z} $ and $ n\in \mathbb{N}_{1} $ let $ V_{n}, V_{-n} $, $ W_{n,p} $ and $ W_{-n,p} $ be as in \eqref{ProofFormPolarDecompositionPositiveS}, \eqref{ProofFormPolarDecompositionNegativeS}, \eqref{ProofFormPolarDecompositionPositiveT} and \eqref{ProofFormPolarDecompositionNegativeT}, respectively.
Since $ S_{n} $ and $ T_{n} $ are quasi-invertible, we infer from \eqref{FormQuasiInvertibleClosedUnderProducts} and \eqref{FormPolarDecompositionGeneral} that $ V_{n} $ and $ W_{n,p} $ are unitary for every $ n\in \mathbb{Z} $. \\
    (i)$ \Longrightarrow $(ii). By Theorem \ref{ThmGeneralCharacterizationOfUnitaryEquivalence}.(i) the isometry $ U_{0}\!: H\to \ell^{2}(\mathbb{Z},H) $ satisfies \eqref{FormPositiveWeights} and \eqref{FormNegativeWeights}. Since $ U $ is of diagonal form, $ U_{0} $ is of the form $ U_{0}x = (U_{p,0}x)^{(p)} $, where $ U_{p,0}\in \mathbf{B}(H) $ is a unitary operator and $ p\in \mathbb{Z} $. Then \eqref{FormPositiveWeights} takes the form
    \begin{align*}
        U_{p,0}V_{n}^{\ast} S_{n}\cdots S_{1} = W_{n,p}^{\ast} T_{p+n}\cdots T_{p+1} U_{p,0}, \qquad n\in \mathbb{N}_{1}.
    \end{align*}
    Thus, \eqref{FormPositiveWeightsDiagonalForm} easily follows from the above equality. In turn, using \eqref{ProofFormPolarDecompAdjointS} and \eqref{ProofFormPolarDecompAdjointT}, \eqref{FormNegativeWeights} takes the form
    \begin{align*}
        U_{p,0}V_{-n} S_{-n+1}^{\ast}\cdots S_{0}^{\ast} = W_{-n,p}T_{p-n+1}^{\ast}\cdots T_{p}^{\ast}U_{p,0}, \qquad n\in \mathbb{N}_{1}.
    \end{align*}
    From the above, we can easily deduce \eqref{FormNegativeWeightsDiagonalForm}.\\
    (ii)$ \Longrightarrow $(i).  Observe that for every $ x\in H $,
    \begin{align*}
        &\lVert \lvert S_{n}\cdots S_{1}\rvert x\rVert^{2} = \lVert V_{n}\lvert S_{n}\cdots S_{1}\rvert x\rVert^{2} = \lVert S_{n}\cdots S_{1} x\rVert^{2} \\
        &\stackrel{\eqref{FormPositiveWeightsDiagonalForm}}{=} \lVert T_{p+n}\cdots T_{p+1}U_{p,0}x\rVert^{2} = \lVert W_{n,p}\lvert T_{p+n}\cdots T_{p+1}\rvert U_{p,0}x\rVert^{2} \\
        &= \lVert \lvert T_{p+n}\cdots T_{p+1}\rvert U_{p,0}x\rVert^{2} = \lVert U_{p,0}^{\ast}\lvert T_{p+n}\cdots T_{p+1}\rvert U_{p,0}x\rVert^{2}.
    \end{align*}
    Hence,
    \begin{equation*}
        \langle \lvert S_{n}\cdots S_{1}\rvert^{2}x,x\rangle = \langle (U_{p,0}^{\ast}\lvert T_{p+n}\cdots T_{p+1}\rvert U_{p,0})^{2}x,x\rangle, \qquad x\in H,
    \end{equation*}
    which implies that $ \lvert S_{n}\cdots S_{1}\rvert^{2} = (U_{p,0}^{\ast}\lvert T_{p+n}\cdots T_{p+1}\rvert U_{p,0})^{2} $ for every $ n\in \mathbb{N}_{1} $. By the uniqueness of square root, $ \lvert S_{n}\cdots S_{1}\rvert = U_{p,0}^{\ast}\lvert T_{p+n}\cdots T_{p+1}\rvert U_{p,0} $. From this it follows that
    \begin{equation}
        \label{ProofFormPositiveModuliCommutesWithUnitary}
        U_{p,0} \lvert S_{n}\cdots S_{1}\rvert = \lvert T_{p+n}\cdots T_{p+1}\rvert U_{p,0}, \qquad n\in \mathbb{N}_{1}
    \end{equation}
    In a similar manner we prove that
    \begin{equation}
        \label{ProofFormNegativeModuliCommutesWithUnitary}
        U_{p,0}\lvert S_{-n+1}^{\ast}\cdots S_{0}^{\ast}\rvert = \lvert T_{p-n+1}^{\ast}\cdots T_{p}^{\ast}\rvert U_{p,0}, \qquad n\in \mathbb{N}.
    \end{equation} 
    Define $ U_{0}\!: H\to \ell^{2}(\mathbb{Z},H) $ by the formula: $ U_{0}x = (U_{p,0}x)^{(p)} $, $ x\in H $. Clearly, $ U_{0} $ is isometric. In view of \eqref{ProofFormPositiveModuliCommutesWithUnitary} and \eqref{ProofFormNegativeModuliCommutesWithUnitary}, \eqref{FormPositiveWeights} and \eqref{FormNegativeWeights} hold; \eqref{FormWanderingProperty} and \eqref{FormClosedSpan} are trivially satisfied. The application of Theorem \ref{ThmGeneralCharacterizationOfUnitaryEquivalence}.(ii) completes the proof.
\end{proof}
Next, we show a convenient necessary condition for bilateral weighted shifts to be unitarily equivalent by a unitary operator of diagonal form.
\begin{lemma}
    \label{LemNecessaryConditionEquivalenceByUnitaryOfDiagonalForm}
    Suppose that $ S $ and $ T $ are as in Theorem \ref{ThmGeneralCharacterizationOfUnitaryEquivalence}. Assume that there exists a unitary operator $ U = [U_{i,j}]_{i,j\in \mathbb{Z}}\in \mathbf{B}(\ell^{2}(\mathbb{Z},H)) $ of diagonal form such that $ US = TU $. Then there exists $ p\in \mathbb{Z} $ such that $ \lvert S_{i}\rvert $ and $ \lvert T_{i+p}\rvert $ are unitarily equivalent for all $ i\in \mathbb{Z} $; in particular, $ \sigma(\lvert S_{i}\rvert) = \sigma(\lvert T_{i+p}\rvert) $ for every $ i\in \mathbb{Z} $.
\end{lemma}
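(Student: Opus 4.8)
The plan is to exploit the explicit structure forced by the \emph{diagonal form} hypothesis, which makes the matrix of $U$ supported on a single diagonal and reduces everything to the first-order modulus relation. First I would write $U = F^{p}D[(A_{i})_{i\in\mathbb{Z}}]$ for some $p\in\mathbb{Z}$ and a uniformly bounded sequence $(A_{i})_{i\in\mathbb{Z}}\subset\mathbf{B}(H)$, as permitted by the definition of diagonal form. Since $F$ is unitary, $D[(A_{i})_{i\in\mathbb{Z}}] = (F^{p})^{\ast}U$ is a product of unitaries, hence unitary; and a diagonal operator is unitary if and only if each of its diagonal entries is unitary, so every $A_{i}$ is unitary. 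Computing the action directly, $U(x_{i})_{i\in\mathbb{Z}} = (A_{i-p}x_{i-p})_{i\in\mathbb{Z}}$, which shows that $U_{i,j}=0$ unless $i-j=p$ and that $U_{j+p,j}=A_{j}$ for every $j\in\mathbb{Z}$.

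Next I would record the modulus intertwining in the form needed here. Exactly as at the beginning of the proof of Theorem \ref{ThmGeneralCharacterizationOfUnitaryEquivalence}, from $US=TU$ one gets $US^{\ast}=T^{\ast}U$, hence $U\lvert S\rvert^{2}=U S^{\ast}S = T^{\ast}TU=\lvert T\rvert^{2}U$, and the square root theorem together with Berberian's trick give $U\lvert S\rvert = \lvert T\rvert U$. By Lemma \ref{LemBasicPropertiesOfOperatorShifts}(iii) we have $\lvert S\rvert = D[(\lvert S_{i+1}\rvert)_{i\in\mathbb{Z}}]$ and $\lvert T\rvert = D[(\lvert T_{i+1}\rvert)_{i\in\mathbb{Z}}]$, so comparing matrix entries yields $U_{i,j}\lvert S_{j+1}\rvert = \lvert T_{i+1}\rvert U_{i,j}$ for all $i,j\in\mathbb{Z}$; this is precisely the instance $n=1$ of \eqref{ProofFormModulusOfShift}.

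Finally I would specialize to the single nonzero diagonal $i=j+p$. There the previous identity reads $A_{j}\lvert S_{j+1}\rvert = \lvert T_{j+p+1}\rvert A_{j}$, and since $A_{j}$ is unitary this is $\lvert S_{j+1}\rvert = A_{j}^{\ast}\lvert T_{j+p+1}\rvert A_{j}$. Reindexing by $i=j+1$, we obtain $\lvert S_{i}\rvert = A_{i-1}^{\ast}\lvert T_{i+p}\rvert A_{i-1}$ for every $i\in\mathbb{Z}$, so $\lvert S_{i}\rvert$ and $\lvert T_{i+p}\rvert$ are unitarily equivalent (via $A_{i-1}$); the equality $\sigma(\lvert S_{i}\rvert)=\sigma(\lvert T_{i+p}\rvert)$ then follows because unitarily equivalent operators share the same spectrum.

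The entire argument is essentially bookkeeping, and the only step requiring genuine care is the first one: correctly identifying the shift $p$ and verifying that the diagonal entries $A_{j}=U_{j+p,j}$ are unitary, since these are exactly the operators that will implement the equivalences. Once the matrix of $U$ is pinned down, the hypothesis of diagonal form lets us avoid the products $S_{j+n}\cdots S_{j+1}$ and their polar decompositions that dominate the general Theorem \ref{ThmGeneralCharacterizationOfUnitaryEquivalence}; only the $n=1$ modulus relation is used, which is what makes this necessary condition cheap to extract.
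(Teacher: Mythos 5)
Your proposal is correct and follows essentially the same route as the paper: extract the single nonzero diagonal $U_{j+p,j}$ of the unitary $U$ (which the paper asserts directly and you justify via the factorization $U=F^{p}D[(A_{i})_{i\in\mathbb{Z}}]$), derive $U\lvert S\rvert=\lvert T\rvert U$ from $US=TU$ and $US^{\ast}=T^{\ast}U$ via the square root theorem and Berberian's trick, and compare entries using Lemma \ref{LemBasicPropertiesOfOperatorShifts} to get $U_{j+p,j}\lvert S_{j+1}\rvert=\lvert T_{j+p+1}\rvert U_{j+p,j}$. The only difference is your slightly more explicit verification that the diagonal entries are unitary, which the paper leaves to the reader.
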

\begin{proof}
   Since $ U $ is of diagonal form, there exists $ p\in \mathbb{Z} $ such that $ U_{i+p,i} $ ($ i\in \mathbb{Z} $) are the only non-zero entries of the matrix representation of $ U $. This implies that $ U_{i+p,i} $ has to be unitary for every $ i\in \mathbb{Z} $. The equalities $ US = TU $ and $ US^{\ast} = T^{\ast}U $ implies that $ U\lvert S\rvert = \lvert T\rvert U $. Now, it follows from Lemma \ref{LemBasicPropertiesOfOperatorShifts} that
    \begin{equation*}
        U_{i+p,i}\lvert S_{i+1}\rvert = \lvert T_{i+p+1}\rvert U_{i+p,i}, \qquad i\in \mathbb{Z}.
    \end{equation*}
    Hence, $ \lvert S_{i}\rvert $ and $ \lvert T_{i+p}\rvert $ are unitarily equivalent for all $ i\in \mathbb{Z} $.
\end{proof}
It can be easily deduced from the Schur decomposition (see \cite[Theorem 2.31]{HorJoh1}) that if $ A\in \mathbf{B}(\mathbb{C}^{m}) $, then $ \sigma(\lvert A\rvert) = \{\lvert \lambda\rvert\!: \lambda \in \sigma(A) \} $. Hence, in case $ H = \mathbb{C}^{m} $ ($ m\in \mathbb{N}_{1} $) the necessary condition $ \sigma(\lvert S_{i}\rvert) = \sigma(\lvert T_{i+p}\rvert) $ in Lemma \ref{LemNecessaryConditionEquivalenceByUnitaryOfDiagonalForm} actually says that if the two bilateral weighted shifts $ S $ and $ T $ are unitarily equivalent by a unitary operator of diagonal form, then the moduli of eigenvalues of $ S_{i} $ and $ T_{i+p} $ are equal for some $ p\in \mathbb{Z} $ (see \cite[Proposition 2.7]{Kos1} for the proof in the case $ H = \mathbb{C}^{2} $). However, in \cite[Example 2.8]{Kos1} the author showed that this condition does not guarantee that the shifts are unitarily equivalent by an operator of diagonal form, even if the weights are normal and commuting. The next result shed a new light on the aforementioned example.
\begin{corollary}
    Let $ H = \mathbb{C}^{m} $, $ m\in \mathbb{N}_{1} $. Let $ (S_{i})_{i\in \mathbb{Z}}, (T_{i})_{i\in \mathbb{Z}} \subset \mathbf{B}(H) $ be two uniformly bounded sequences of normal, invertible and commuting operators on $ H $ and let $ S,T $ be the bilateral weighted shifts with weights $ (S_{i})_{i\in \mathbb{Z}} $ and $ (T_{i})_{i\in \mathbb{Z}} $, respectively. The following conditions are equivalent:
    \begin{enumerate}
        \item $ S $ and $ T $ are unitarily equivalent by a unitary operator $ U\in \mathbf{B}(\ell^{2}(\mathbb{Z},H)) $ of diagonal form,
        \item there exist $ p\in \mathbb{Z} $ and two orthonormal basis $ (v_{n})_{n=1}^{m},(w_{n})_{n=1}^{m} $ of $ H $ such that
        \begin{enumerate}
            \item $ v_{n} $ is an eigenvector of $ \lvert S_{i}\rvert $ for all $ i\in \mathbb{Z} $ and $ n = 1,\ldots,m $,
            \item $ w_{n} $ is an eigenvector of $ \lvert T_{i}\rvert $ for all $ i\in \mathbb{Z} $ and $ n = 1,\ldots,m $,
            \item for every $ \lambda\in \mathbb{C} $ and every $ n = 1,\ldots,m $
            \begin{equation*}
                \lvert S_{i}\rvert v_{n} = \lambda v_{n}\iff \lvert T_{i+p}\rvert w_{n} = \lambda w_{n}, \qquad i\in \mathbb{Z}.
            \end{equation*}
        \end{enumerate}
    \end{enumerate}
\end{corollary}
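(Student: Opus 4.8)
The plan is to reduce (i)$\Leftrightarrow$(ii) to a single operator equation on $H$ and then feed it into Corollaries \ref{CorUnitaryEquivalenceShiftsWithDoublyCommutingWeights} and \ref{CorCharacterizationOfUnitaryEquivalenceByDiagonalForm}. First I would record three consequences of the hypotheses: since each $S_i$ is normal and the family $(S_i)_{i\in\mathbb{Z}}$ commutes, the Fuglede--Putnam theorem shows that the weights are in fact doubly commuting, so $\{\lvert S_i\rvert\!:i\in\mathbb{Z}\}$ is a commuting family of positive operators on the finite-dimensional space $H$ and therefore admits a common orthonormal eigenbasis; the same holds for $(T_i)_{i\in\mathbb{Z}}$; and normality gives $\lvert S_i^{\ast}\rvert = \lvert S_i\rvert$ and $\lvert T_i^{\ast}\rvert = \lvert T_i\rvert$. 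The decisive observation is that (ii) is equivalent to the existence of $p\in\mathbb{Z}$ and a single unitary $\Omega\in\mathbf{B}(H)$ with
\begin{equation*}
    \Omega\lvert S_i\rvert = \lvert T_{i+p}\rvert\Omega, \qquad i\in\mathbb{Z}.
\end{equation*}
Indeed, given the bases and $p$ from (ii) one sets $\Omega v_n = w_n$, which is unitary because both are orthonormal bases, and condition (c) says precisely that $\Omega$ sends the $\lvert S_i\rvert$-eigenvalue of $v_n$ to the $\lvert T_{i+p}\rvert$-eigenvalue of $w_n$, i.e.\ $\Omega\lvert S_i\rvert\Omega^{\ast} = \lvert T_{i+p}\rvert$; conversely, diagonalizing $\{\lvert S_i\rvert\}$ by an orthonormal basis $(v_n)$ and putting $w_n = \Omega v_n$ produces bases satisfying (a)--(c).

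For (i)$\Rightarrow$(ii) I would apply Corollary \ref{CorUnitaryEquivalenceShiftsWithDoublyCommutingWeights}.(i), which is legitimate since the weights doubly commute. Because $U$ is of diagonal form its zeroth column has the shape $U_0 x = (U_{p,0}x)^{(p)}$ for some $p\in\mathbb{Z}$ and some unitary $U_{p,0}\in\mathbf{B}(H)$; substituting this into \eqref{FormPositiveWeightsDoublyCommuting} and \eqref{FormNegativeWeightsDoublyCommuting} and comparing the single nonzero coordinate gives $U_{p,0}\lvert S_n\rvert = \lvert T_{p+n}\rvert U_{p,0}$ for $n\in\mathbb{N}_1$ and $U_{p,0}\lvert S_{-n+1}^{\ast}\rvert = \lvert T_{p-n+1}^{\ast}\rvert U_{p,0}$ for $n\in\mathbb{N}_1$. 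Using $\lvert S_i^{\ast}\rvert = \lvert S_i\rvert$ and $\lvert T_i^{\ast}\rvert = \lvert T_i\rvert$, these two families of relations merge into $U_{p,0}\lvert S_i\rvert = \lvert T_{i+p}\rvert U_{p,0}$ for every $i\in\mathbb{Z}$, so $\Omega := U_{p,0}$ is the unitary required above and (ii) follows from the reformulation.

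For (ii)$\Rightarrow$(i) I would take $\Omega = U_{p,0}$ supplied by the reformulation and verify the hypotheses of Corollary \ref{CorCharacterizationOfUnitaryEquivalenceByDiagonalForm}.(ii). Since the weights doubly commute, the modulus of a product factors as the product of the moduli (cf.\ \cite[Lemma 2.1]{AnaChaJabSto1}), whence for $v\in H$ and $n\in\mathbb{N}_1$
\begin{equation*}
    \lVert S_n\cdots S_1 v\rVert = \bigl\lVert \lvert S_n\rvert\cdots\lvert S_1\rvert v\bigr\rVert, \qquad \lVert T_{p+n}\cdots T_{p+1}U_{p,0}v\rVert = \bigl\lVert \lvert T_{p+n}\rvert\cdots\lvert T_{p+1}\rvert U_{p,0}v\bigr\rVert.
\end{equation*}
Iterating $\Omega\lvert S_i\rvert = \lvert T_{i+p}\rvert\Omega$ yields $\lvert T_{p+n}\rvert\cdots\lvert T_{p+1}\rvert U_{p,0} = U_{p,0}\lvert S_n\rvert\cdots\lvert S_1\rvert$, and as $U_{p,0}$ is unitary the two right-hand norms coincide; this is \eqref{FormPositiveWeightsDiagonalForm}. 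The analogous computation with adjoint products, again invoking $\lvert S_i^{\ast}\rvert = \lvert S_i\rvert$, yields \eqref{FormNegativeWeightsDiagonalForm}. Corollary \ref{CorCharacterizationOfUnitaryEquivalenceByDiagonalForm} then produces a diagonal-form unitary intertwining $S$ and $T$.

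The conceptual heart of the argument---and the only genuinely nontrivial step---is the reformulation of (ii) as a single simultaneous intertwiner $\Omega$ on $H$; once this is established both implications are essentially bookkeeping. The routine but delicate points are the passage from products of moduli to individual moduli, which relies on double commutativity together with the finite-dimensional simultaneous diagonalization and uses quasi-invertibility to cancel the dense-range factors $\lvert S_{n-1}\rvert\cdots\lvert S_1\rvert$, and the consistent treatment of the adjoint products on the negative side through $\lvert S_i^{\ast}\rvert = \lvert S_i\rvert$. Throughout I would keep careful track of the shift index $p$, as it is the sole parameter tying together the two eigenvalue labelings.
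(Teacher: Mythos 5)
Your proof is correct, and the implication (i)$\Rightarrow$(ii) is essentially the paper's argument: both pass through Fuglede--Putnam to get double commutativity, apply Corollary \ref{CorUnitaryEquivalenceShiftsWithDoublyCommutingWeights}.(i), read off the single nonzero entry $U_{p,0}$ of the zeroth column, merge the positive- and negative-index relations via $\lvert S_i^{\ast}\rvert=\lvert S_i\rvert$, and transport a common orthonormal eigenbasis of $\{\lvert S_i\rvert\}$ by $U_{p,0}$. Where you genuinely diverge is the converse: the paper verifies \eqref{FormPositiveWeightsDoublyCommuting}, \eqref{FormNegativeWeightsDoublyCommuting}, \eqref{FormWanderingProperty} and \eqref{FormClosedSpan} for $U_{0}x=(U_{p,0}x)^{(p)}$ and invokes Corollary \ref{CorUnitaryEquivalenceShiftsWithDoublyCommutingWeights}.(ii), whereas you verify the norm conditions \eqref{FormPositiveWeightsDiagonalForm} and \eqref{FormNegativeWeightsDiagonalForm} and invoke Corollary \ref{CorCharacterizationOfUnitaryEquivalenceByDiagonalForm}. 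Your routing buys a small but real logical advantage: Corollary \ref{CorUnitaryEquivalenceShiftsWithDoublyCommutingWeights}.(ii) as stated only yields unitary equivalence, so the paper's conclusion that the intertwiner is \emph{of diagonal form} tacitly relies on Remark \ref{RemFormulaForUnitaryOperator} (the constructed unitary's columns are determined by $U_{0}$, which here is concentrated in row $p$), while Corollary \ref{CorCharacterizationOfUnitaryEquivalenceByDiagonalForm}.(i) asserts the diagonal form explicitly, so nothing extra is needed. Your preliminary reformulation of (ii) as the existence of a single unitary $\Omega$ with $\Omega\lvert S_i\rvert=\lvert T_{i+p}\rvert\Omega$ for all $i\in\mathbb{Z}$ is also a nice organizational device absent from the paper, though mathematically it encodes the same simultaneous-diagonalization bookkeeping; the one hypothesis it leans on, commutativity of the moduli $\lvert S_i\rvert$ (needed for the common eigenbasis), follows from double commutativity exactly as in the paper.
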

\begin{proof}
    (i)$ \Longrightarrow $(ii). Since $ S_{i} $'s ($ i\in \mathbb{Z} $) are normal and commuting, we infer from Fuglede-Putnam theorem (see \cite[Theorem 6.7]{Con1}), that they are doubly commuting; the same holds for $ T_{i} $'s ($ i\in \mathbb{Z} $). It follows from Corollary \ref{CorUnitaryEquivalenceShiftsWithDoublyCommutingWeights}.(i) that the isometry $ U_{0}\!: H\to \ell^{2}(\mathbb{Z},H) $ satisfies \eqref{FormPositiveWeightsDoublyCommuting}, \eqref{FormNegativeWeightsDoublyCommuting}, \eqref{FormWanderingProperty} and \eqref{FormClosedSpan}. By (i), $ U_{0} $ is such that $ U_{i,0} = 0 $ for all $ i\in \mathbb{Z}\setminus \{p\} $ and $ U_{p,0} $ is unitary with some $ p\in \mathbb{Z} $. Since $ \lvert S_{i}\rvert $'s ($ i\in \mathbb{Z} $) are normal and commuting, we deduce from \cite[Theorem 2.5.5]{HorJoh1} that there exists an orthonormal basis $ (v_{n})_{n=1}^{m} $ such that
    \begin{equation*}
        \lvert S_{i}\rvert v_{n} = \lambda_{i,n}v_{n}, \qquad i\in \mathbb{Z}, \ n=1,\ldots,m;
    \end{equation*} in particular, (a) holds. Using \eqref{FormPositiveWeightsDoublyCommuting} and \eqref{FormNegativeWeightsDoublyCommuting}, we obtain
    \begin{equation*}
        \lambda_{i,n} U_{p,0}v_{n} = U_{p,0}\lvert S_{i}\rvert v_{n} = \lvert T_{p+i}\rvert U_{p,0}v_{n}, \qquad i\in \mathbb{Z}, \ n=1,\ldots,m. 
    \end{equation*}
    By the above, $ (U_{p,0}v_{n})_{n=1}^{m} $ is the orthonormal basis of $ H $ satisfying (b) and (c).\\
    (ii)$ \Longrightarrow $(i). Define $ U_{p,0}\in \mathbf{B}(H) $ as follows:
    \begin{equation*}
        U_{p,0}v_{n} = w_{n}, \qquad n=1,\ldots, m.
    \end{equation*}
    Clearly, $ U_{p,0} $ is unitary. For every $ n=1,\ldots,m $, if $ S_{i}v_{n} = \lambda_{i,n}v_{n} $, then
    \begin{align*}
        U_{p,0}\lvert S_{i}\rvert v_{n} = \lambda_{i,n}U_{p,0}v_{n} = \lambda_{i,n}w_{n} \stackrel{(c)}{=} \lvert T_{i+p}\rvert w_{n} = \lvert T_{i+p}\rvert U_{p,0}v_{n}.
    \end{align*}
    Define $ U_{0}\!: H\to \ell^{2}(\mathbb{Z},H) $ by the formula:
    \begin{equation*}
        U_{0}x = (U_{p,0}x)^{(p)}, \qquad x\in H.
    \end{equation*}
    Then $ U_{0} $ is an isometry satisfying \eqref{FormPositiveWeightsDoublyCommuting} and \eqref{FormNegativeWeightsDoublyCommuting}. The conditions \eqref{FormWanderingProperty} and \eqref{FormClosedSpan} trivially hold. Applying Corollary \ref{CorUnitaryEquivalenceShiftsWithDoublyCommutingWeights}.(ii) we get (i).
\end{proof}
Now let us present another characterization of unitary equivalence of bilateral shifts with operator weights in terms of factors in their polar decompositions.
\begin{theorem}
    \label{ThmCharacterizationOfUnitaryEquivalenceByPolarDecompositions}
    Suppose that $ S $ and $ T $ are as in Theorem \ref{ThmGeneralCharacterizationOfUnitaryEquivalence}. Let $ S = V_{S}\lvert S\rvert $ and $ T = V_{T}\lvert T\rvert $ be the polar decompositions of $ S $ and $ T $, respectively. For a unitary operator $ U\!: \ell^{2}(\mathbb{Z},H)\to \ell^{2}(\mathbb{Z},H) $ the following conditions are equivalent:
    \begin{enumerate}
        \item $ US = TU $,
        \item $ U\lvert S\rvert = \lvert T\rvert U $ and $ UV_{S} = V_{T}U $.
    \end{enumerate}
\end{theorem}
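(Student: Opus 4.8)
The plan is to prove the two implications separately, with (ii)$\Rightarrow$(i) being immediate and (i)$\Rightarrow$(ii) carrying the real content. For (ii)$\Rightarrow$(i) I would simply substitute the polar factorizations: assuming $U\lvert S\rvert = \lvert T\rvert U$ and $UV_{S} = V_{T}U$, one computes $US = UV_{S}\lvert S\rvert = V_{T}U\lvert S\rvert = V_{T}\lvert T\rvert U = TU$, which is a single line using nothing beyond the two hypotheses.

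For (i)$\Rightarrow$(ii) the first step is to recover the modulus intertwining $U\lvert S\rvert = \lvert T\rvert U$. Since $U$ is unitary, $US = TU$ yields $US^{\ast} = T^{\ast}U$ upon taking adjoints and conjugating by $U$; multiplying these two relations gives $U\lvert S\rvert^{2} = \lvert T\rvert^{2}U$, that is, $U\lvert S\rvert^{2}U^{\ast} = \lvert T\rvert^{2}$. The key observation here is that $U\lvert S\rvert U^{\ast}$ is a positive operator whose square equals $\lvert T\rvert^{2}$, so by uniqueness of the positive square root it must coincide with $\lvert T\rvert$, giving $U\lvert S\rvert = \lvert T\rvert U$. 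This replaces the square-root theorem and Berberian's trick invoked in the proof of Theorem \ref{ThmGeneralCharacterizationOfUnitaryEquivalence}, which is available in cleaner form here precisely because $U$ is genuinely unitary rather than an arbitrary intertwiner.

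The second step passes from the modulus to the partial-isometry factors. Combining $US = TU$ with the just-established $U\lvert S\rvert = \lvert T\rvert U$ gives $UV_{S}\lvert S\rvert = V_{T}\lvert T\rvert U = V_{T}U\lvert S\rvert$, so $UV_{S}$ and $V_{T}U$ agree on $\mathcal{R}(\lvert S\rvert)$. The remaining point is that this range is dense: since each weight $S_{i}$ is quasi-invertible, each $\lvert S_{i}\rvert$ is injective, hence $\lvert S\rvert = D[(\lvert S_{i+1}\rvert)_{i\in\mathbb{Z}}]$ is injective and self-adjoint, so $\overline{\mathcal{R}(\lvert S\rvert)} = \mathcal{N}(\lvert S\rvert)^{\perp} = \ell^{2}(\mathbb{Z},H)$. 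As $UV_{S}$ and $V_{T}U$ are bounded and agree on a dense set, they coincide, yielding $UV_{S} = V_{T}U$. (By Lemma \ref{LemPolarDecompositionOfShift} together with quasi-invertibility, $V_{S}$ and $V_{T}$ are in fact shifts with unitary weights, hence themselves unitary, but this stronger fact is not needed for the density argument.) I expect the only genuine obstacle to be the square-root step, which must be justified via uniqueness of positive square roots rather than mere functional calculus; the density of $\mathcal{R}(\lvert S\rvert)$ is then routine given quasi-invertibility of the weights.
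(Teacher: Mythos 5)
Your proposal is correct and follows essentially the same route as the paper: the implication (ii)$\Rightarrow$(i) is the identical one-line substitution, and for (i)$\Rightarrow$(ii) the paper likewise first establishes $U\lvert S\rvert = \lvert T\rvert U$ and then cancels via $UV_{S}\lvert S\rvert = V_{T}U\lvert S\rvert$ using the density $\overline{\mathcal{R}(\lvert S\rvert)} = \overline{\mathcal{R}(D[(\lvert S_{i+1}\rvert)_{i\in\mathbb{Z}}])} = \ell^{2}(\mathbb{Z},H)$, exactly as you argue. The only cosmetic difference is the square-root step, where the paper re-invokes the argument from Theorem \ref{ThmGeneralCharacterizationOfUnitaryEquivalence} (square-root theorem plus Berberian's trick), while you instead observe directly that $U\lvert S\rvert U^{\ast}$ is a positive square root of $\lvert T\rvert^{2}$ --- equally valid here precisely because $U$ is unitary.
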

\begin{proof}
    (i)$ \Longrightarrow $(ii). Repeating an argument as in the proof of Theorem \ref{ThmGeneralCharacterizationOfUnitaryEquivalence}, we have $ U\lvert S\rvert = \lvert T\rvert U $. Moreover,
    \begin{align*}
        UV_{S}\lvert S\rvert = US = TU = V_{T}\lvert T\rvert U = V_{T}U\lvert S\rvert.
    \end{align*}
    Since $ \overline{\mathcal{R}(\lvert S\rvert)} = \overline{\mathcal{R}(D[(\lvert S_{i+1}\rvert)_{i\in \mathbb{Z}}])} = \ell^{2}(\mathbb{Z},H) $, it follows that $ UV_{S} = V_{T}U $.\\
    (ii)$ \Longrightarrow $(i). We have $ US = UV_{S}\lvert S\rvert = V_{T}U\lvert S\rvert = V_{T}\lvert T\rvert U = TU $.
\end{proof}
By Lemma \ref{LemPolarDecompositionOfShift}, the operators $ V_{S} $ and $ V_{T} $ in the above theorem are weighted shifts with unitary weights. By Corollary \ref{CorUnitaryEquivalenceShiftsWithUnitaryWeights}, the equality $ UV_{S} = V_{T}U $ always holds with some unitary operator $ U $; however, this operator $ U $ does not have to satisfy $ U\lvert S\rvert = \lvert T\rvert U $. Since the unitary operator $ U $ making $ S $ and $ T $ unitarily equivalent can be found among all unitary operators intertwining $ V_{s} $ and $ V_{T} $, the possible further research is to describe all unitary operators, which intertwine two shifts with unitary weights. \par
The rest of this section is devoted to study unitary equivalence of shifts with positive commuting weights. First, we prove that unitary operator intertwining bilateral shifts with positive weights has to be constant on diagonals (see \cite[Lemma 5]{Pili1} for similar result for self-adjoint operator intertwining bilateral shifts).
\begin{lemma}
    \label{LemNonnegativeWeightsUnitaryConstantOnDiagonals}
    Let $ H $ be a Hilbert space. Let $ (S_{i})_{i\in \mathbb{Z}}, (T_{i})_{i\in \mathbb{Z}} \subset \mathbf{B}(H) $ be two uniformly bounded sequences of positive operators on $ H $ and denote by $ S,T $ the bilateral weighted shifts with weights $ (S_{i})_{i\in \mathbb{Z}} $ and $ (T_{i})_{i\in \mathbb{Z}} $, respectively. Suppose $ U = [U_{i,j}]_{i,j\in \mathbb{Z}} \in \mathbf{B}(\ell^{2}(\mathbb{Z},H)) $ is a unitary operator such that $ US = TU $. Then $ U $ is constant on diagonals, that is, $ U_{i+1,j+1} = U_{i,j} $ for all $ i,j\in \mathbb{Z} $.
\end{lemma}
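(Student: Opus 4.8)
The plan is to extract from $US = TU$ two intertwining relations for the matrix entries of $U$ and then to play them against one another. Since $U$ is unitary, $US = TU$ gives $S = U^{\ast}TU$, hence $S^{\ast} = U^{\ast}T^{\ast}U$ and therefore $US^{\ast} = T^{\ast}U$ as well. Combining the two relations yields $US^{\ast}S = T^{\ast}US = T^{\ast}TU$, that is $U\lvert S\rvert^{2} = \lvert T\rvert^{2}U$; by the uniqueness of the positive square root together with Berberian's trick, exactly as in the proof of Theorem \ref{ThmGeneralCharacterizationOfUnitaryEquivalence}, this upgrades to $U\lvert S\rvert = \lvert T\rvert U$. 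This modulus relation is the crucial ingredient, and deriving it is the only place where one must be slightly careful.

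Next I would exploit the positivity of the weights. Because each $S_{i}$ is positive, $\lvert S_{i}\rvert = S_{i}$, so by Lemma \ref{LemPolarDecompositionOfShift} (or by the computation underlying Lemma \ref{LemBasicPropertiesOfOperatorShifts}) the modulus $\lvert S\rvert$ is the \emph{diagonal} operator $D[(S_{i+1})_{i\in \mathbb{Z}}]$, and likewise $\lvert T\rvert = D[(T_{i+1})_{i\in \mathbb{Z}}]$. Reading off the $(i,j)$ entry of the identity $U\lvert S\rvert = \lvert T\rvert U$ between these diagonal operators gives the entrywise relation
\begin{equation*}
    T_{i+1}U_{i,j} = U_{i,j}S_{j+1}, \qquad i,j\in \mathbb{Z};
\end{equation*}
in words, each entry $U_{i,j}$ intertwines the positive operators $S_{j+1}$ and $T_{i+1}$.

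The second relation comes from $US = TU$ directly: Lemma \ref{LemInterweavingWeightedShifts}(i) with $n = 1$ reads $U_{i+1,j+1}S_{j+1} = T_{i+1}U_{i,j}$ for all $i,j$. Substituting the entrywise modulus relation into the right-hand side turns this into $U_{i+1,j+1}S_{j+1} = U_{i,j}S_{j+1}$, i.e. $(U_{i+1,j+1}-U_{i,j})S_{j+1} = 0$. Finally, since $S_{j+1}$ is positive it is quasi-invertible, so $\overline{\mathcal{R}(S_{j+1})} = \mathcal{N}(S_{j+1})^{\perp} = H$; a bounded operator vanishing on a dense subspace is zero, whence $U_{i+1,j+1} = U_{i,j}$ for all $i,j$, which is the claim.

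I expect the only genuine subtlety to be the passage from $U\lvert S\rvert^{2} = \lvert T\rvert^{2}U$ to $U\lvert S\rvert = \lvert T\rvert U$, but this is already settled in Theorem \ref{ThmGeneralCharacterizationOfUnitaryEquivalence}; everything afterwards is bookkeeping with the two entrywise identities. It is worth noting why one brings in the modulus at all: using only Lemma \ref{LemInterweavingWeightedShifts}(i) and (ii) (with $n=1$) one gets instead $(U_{i+1,j+1}-U_{i,j})S_{j+1} = -T_{i+1}(U_{i+1,j+1}-U_{i,j})$, an anticommutation-type identity that is awkward to resolve for positive $S_{j+1}, T_{i+1}$. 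The diagonal modulus relation $U\lvert S\rvert = \lvert T\rvert U$ is precisely what circumvents this difficulty and does the real work.
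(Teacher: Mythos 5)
Your proposal is correct and takes essentially the same route as the paper: the paper obtains $U\lvert S\rvert = \lvert T\rvert U$ by invoking Theorem \ref{ThmCharacterizationOfUnitaryEquivalenceByPolarDecompositions}, whose proof is precisely the square-root argument you reproduce from Theorem \ref{ThmGeneralCharacterizationOfUnitaryEquivalence}, and then, just as you do, plays the entrywise identity $U_{i,j}S_{j+1} = T_{i+1}U_{i,j}$ (from Lemma \ref{LemBasicPropertiesOfOperatorShifts}.(iii) with positive weights) against $U_{i+1,j+1}S_{j+1} = T_{i+1}U_{i,j}$ from Lemma \ref{LemInterweavingWeightedShifts} and cancels via the dense range of $S_{j+1}$. (Only cosmetic remark: $\overline{\mathcal{R}(S_{j+1})} = \mathcal{N}(S_{j+1}^{\ast})^{\perp}$ in general; your identification with $\mathcal{N}(S_{j+1})^{\perp}$ is valid here because positive operators are self-adjoint.)
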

\begin{proof}
    From Theorem \ref{ThmCharacterizationOfUnitaryEquivalenceByPolarDecompositions} we obtain that $ U\lvert S\rvert = \lvert T\rvert U $. Since the weights of $ S $ and $ T $ are positive, by Lemma \ref{LemBasicPropertiesOfOperatorShifts}.(iii) we have $ U_{i,j}S_{j+1} = T_{i+1}U_{i,j} $ for all $ i,j\in \mathbb{Z} $. On the other hand, from Lemma \ref{LemInterweavingWeightedShifts} it follows that $ U_{i+1,j+1} S_{j+1} = T_{i+1}U_{i,j} $ for all $ i,j\in \mathbb{Z} $. Since $ \overline{\mathcal{R}(S_{j})} = H $ for every $ j\in \mathbb{Z} $, we obtain that $ U_{i+1,j+1} = U_{i,j} $ for all $ i,j\in \mathbb{Z} $.
\end{proof}
The next result states that if $ H $ is two dimensional, then under certain assumptions on weights the unitary equivalence is always given by the operator having at most two non-zero diagonal.
\begin{theorem}
    \label{Thm2DimUnitaryEquivalenceAtMostTwoNonZeroDiagonals}
    Suppose $ H = \mathbb{C}^{2} $. Let $ (S_{i})_{i\in \mathbb{Z}}, (T_{i})_{i\in \mathbb{Z}} \subset \mathbf{B}(H) $ be two uniformly bounded sequences of positive and commuting operators on $ H $ and assume additionally that every $ S_{j} $ and every $ T_{j} $ ($ j\in \mathbb{Z} $) has two distinct eigenvalues. Denote by $ S,T $ the bilateral weighted shifts with weights $ (S_{i})_{i\in \mathbb{Z}} $ and $ (T_{i})_{i\in \mathbb{Z}} $, respectively. The following conditions are equivalent:
    \begin{enumerate}
        \item $ S $ and $ T $ are unitarily equivalent,
        \item $ S $ and $ T $ are unitarily equivalent by a unitary operator with at most two non-zero diagonals.
    \end{enumerate}
\end{theorem}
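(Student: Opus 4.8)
The implication (ii)$\Longrightarrow$(i) is immediate, so the whole content is (i)$\Longrightarrow$(ii); the plan is to fix a unitary $U$ with $US=TU$ and manufacture from it a unitary intertwiner supported on at most two diagonals. First I would normalize the weights to be diagonal. Since each of $(S_i)_{i\in\mathbb{Z}}$ and $(T_i)_{i\in\mathbb{Z}}$ is a commuting family of positive, hence self-adjoint, operators on $\mathbb{C}^2$, it is simultaneously unitarily diagonalizable: there are fixed unitaries $P,Q\in\mathbf{B}(\mathbb{C}^2)$ with $P^*S_mP=\mathrm{diag}(s^{(1)}_m,s^{(2)}_m)$ and $Q^*T_mQ=\mathrm{diag}(t^{(1)}_m,t^{(2)}_m)$ for all $m$, the assumption of two distinct eigenvalues guaranteeing one-dimensional eigenspaces and thus a well-defined labelling of the eigenvalue sequences. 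Conjugating $S$ and $T$ by the constant diagonal unitaries $D[(P)_{i\in\mathbb{Z}}]$ and $D[(Q)_{i\in\mathbb{Z}}]$ replaces the weights by these diagonal matrices and replaces $U$ by $U'=D[(Q)_{i\in\mathbb{Z}}]^*UD[(P)_{i\in\mathbb{Z}}]$, which is still unitary and still intertwines; since conjugation by a constant diagonal unitary does not move the positions of non-zero entries, it suffices to produce a two-diagonal intertwiner in the diagonalized problem, and so I assume henceforth that $S_m$ and $T_m$ are the diagonal matrices above.

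Next I would read off the combinatorial constraints. By Lemma \ref{LemNonnegativeWeightsUnitaryConstantOnDiagonals} the unitary $U$ for the diagonal weights is constant on diagonals, so $U_{i,j}=A_{i-j}$ for a sequence $A_k\in\mathbf{B}(\mathbb{C}^2)$; feeding this into Lemma \ref{LemInterweavingWeightedShifts}(i) with $n=1$ gives $A_kS_m=T_{m+k}A_k$ for all $k,m$, which entrywise reads $(A_k)_{ab}\,(s^{(b)}_m-t^{(a)}_{m+k})=0$. Thus a non-zero entry $(A_k)_{ab}$ forces the admissibility relation $t^{(a)}_{m+k}=s^{(b)}_m$ for all $m$, i.e. the sequence $t^{(a)}$ is the $k$-shift of $s^{(b)}$. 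I then record which matchings are forced to occur by computing the diagonal entries of $U^*U=I$ and $UU^*=I$: these yield $\sum_k A_k^*A_k=I$ and $\sum_k A_kA_k^*=I$, whence $\sum_{k,a}\lvert(A_k)_{ab}\rvert^2=1$ for each column index $b$ and $\sum_{k,b}\lvert(A_k)_{ab}\rvert^2=1$ for each row index $a$. Consequently, for every $b$ some $(A_k)_{ab}$ is non-zero and for every $a$ some $(A_k)_{ab}$ is non-zero, so in the bipartite graph on the four vertices $\{s^{(1)},s^{(2)}\}\cup\{t^{(1)},t^{(2)}\}$ whose edges are the admissible pairs, all four vertices have degree at least one.

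Finally I would extract the two-diagonal unitary. In a bipartite graph on two plus two vertices in which every vertex has positive degree, Hall's condition holds trivially, so there is a perfect matching: a bijection $\sigma$ of $\{1,2\}$ and integers $k_1,k_2$ with $t^{(\sigma(b))}_{m+k_b}=s^{(b)}_m$ for all $m$ and $b=1,2$. I would then set $\tilde U=\sum_{b=1,2}F^{k_b}D[(E_{\sigma(b),b})_{i\in\mathbb{Z}}]$, where $E_{\sigma(b),b}$ denotes a matrix unit; this operator is non-zero only on the diagonals $k_1$ and $k_2$, hence has at most two non-zero diagonals. Writing $\tilde U_{i,j}=\tilde A_{i-j}$ with $\tilde A_{k_b}=E_{\sigma(b),b}$, a direct check of $\sum_k\tilde A_k^*\tilde A_{k+l}=\delta_{l0}I=\sum_k\tilde A_k\tilde A_{k+l}^*$ shows $\tilde U$ is unitary, the cross terms vanishing because $\sigma$ is a bijection; moreover the admissibility relations give exactly $\tilde A_kS_m=T_{m+k}\tilde A_k$, so $\tilde U S=T\tilde U$ by Lemma \ref{LemInterweavingWeightedShifts}(i). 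Conjugating back by $D[(P)_{i\in\mathbb{Z}}]$ and $D[(Q)_{i\in\mathbb{Z}}]$ produces the required two-diagonal unitary for the original shifts.

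I expect the main obstacle to be conceptual rather than computational: one must see that, although the eigenvalue sequences may admit several compatible shifts at once (so the given $U$ can in principle have arbitrarily many non-zero diagonals, for instance when the sequences are periodic), the positivity-forced constancy on diagonals together with the unitarity trace identities collapses the whole question to a $2\times 2$ bipartite matching problem, which can always be solved by a single monomial unitary living on at most two diagonals. The hypotheses of positivity (for Lemma \ref{LemNonnegativeWeightsUnitaryConstantOnDiagonals}) and of distinct eigenvalues (for the clean simultaneous diagonalization with simple spectrum) are precisely what make this reduction available.
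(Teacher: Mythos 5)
Your argument is correct, and its combinatorial core coincides with the paper's: positivity forces any intertwining unitary to be constant on diagonals (Lemma \ref{LemNonnegativeWeightsUnitaryConstantOnDiagonals}), a non-zero entry rigidly matches an eigenvalue sequence of $S$ with a shifted eigenvalue sequence of $T$, and unitarity guarantees enough non-zero entries to extract a matching. Indeed, your identities $\sum_{k}A_{k}^{\ast}A_{k}=I=\sum_{k}A_{k}A_{k}^{\ast}$ (the diagonal entries of $U^{\ast}U=UU^{\ast}=I$, which converge since columns and rows of a unitary are square-summable) are exactly the content of Lemma \ref{LemZeroColumnEigenvectorsMapping}, and your appeal to Hall's condition replaces the case table in Lemma \ref{Lem2DimBijection}. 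Where you genuinely depart from the paper is the endgame. The paper constructs only a candidate zeroth column $V_{0}$ (which is precisely your $E_{\sigma(b),b}$ placed at row $k_{b}$), verifies \eqref{FormPositiveWeightsDoublyCommuting}, \eqref{FormNegativeWeightsDoublyCommuting}, \eqref{FormWanderingProperty} and \eqref{FormClosedSpan} --- the verification of the wandering and spanning conditions being the bulk of its proof --- and then assembles the unitary via Corollary \ref{CorUnitaryEquivalenceShiftsWithDoublyCommutingWeights}.(ii) and Remark \ref{RemFormulaForUnitaryOperator}. You instead write down the whole monomial operator $\sum_{b}F^{k_{b}}D[(E_{\sigma(b),b})_{i\in\mathbb{Z}}]$ and check unitarity and intertwining directly through the Laurent-coefficient identities and Lemma \ref{LemInterweavingWeightedShifts}, bypassing Theorem \ref{ThmGeneralCharacterizationOfUnitaryEquivalence} entirely; the cost is a preliminary simultaneous diagonalization, which is harmless since conjugation by the constant unitaries $D[(P)_{i\in\mathbb{Z}}]$, $D[(Q)_{i\in\mathbb{Z}}]$ preserves diagonal supports, and the gain is a shorter, self-contained verification. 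Notably, your entrywise relation $(A_{k})_{ab}\,(s^{(b)}_{m}-t^{(a)}_{m+k})=0$ replaces the paper's step in which simplicity of the spectrum forces $U_{\tau(j),0}v_{j}$ to be proportional to $w_{\sigma(j)}$; since commuting positive matrices are simultaneously diagonalizable regardless of multiplicities, your route in effect proves the theorem without the two-distinct-eigenvalues hypothesis, which you use only for labelling. One detail to state explicitly: when $k_{1}=k_{2}$ your prescription must be read as $\tilde{A}_{k_{1}}=E_{\sigma(1),1}+E_{\sigma(2),2}$, a permutation matrix; the checks go through unchanged and the resulting intertwiner is then of diagonal form.
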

Before we state the proof we need two lemmata.
\begin{lemma}
    \label{LemZeroColumnEigenvectorsMapping}
    Let $ H $, $ (S_{i})_{i\in \mathbb{Z}}, (T_{i})_{i\in \mathbb{Z}} $ be as in the assumptions of Theorem \ref{Thm2DimUnitaryEquivalenceAtMostTwoNonZeroDiagonals}. Suppose $ U = [U_{i,j}]_{i,j\in \mathbb{Z}} \in \mathbf{B}(\ell^{2}(\mathbb{Z},H)) $ is a unitary operator such that $ US = TU $. Let $ \{v_{1},v_{2}\}\subset H $ be the common orthonormal basis of eigenvectors for the sequence $ (S_{i})_{i\in \mathbb{Z}} $ and let $ \{w_{1},w_{2}\}\subset H $ be the common orthonormal basis of eigenvectors for the sequence $ (T_{i})_{i\in \mathbb{Z}} $. Then:
    \begin{enumerate}
        \item for every $ j\in \{1,2\} $ there exist $ k_{j}\in \{1,2\} $ and $ i_{j}\in \mathbb{Z} $ such that
        \begin{equation*}
            \langle U_{i_{j},0}v_{j},w_{k_{j}}\rangle \not= 0,
        \end{equation*}
        \item for every $ k\in \{1,2\} $ there exist $ j_{k}\in \{1,2\} $ and $ i_{k}\in \mathbb{Z} $ such that
        \begin{equation*}
            \langle U_{i_{k},0}v_{j},w_{k}\rangle \not= 0.
        \end{equation*}
    \end{enumerate}
\end{lemma}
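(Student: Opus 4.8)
The plan is to reduce both statements to the single isometry $U_0\colon H\to\ell^2(\mathbb{Z},H)$, $U_0x=(U_{i,0}x)_{i\in\mathbb{Z}}=Ux^{(0)}$, and to exploit that $U$ is unitary: I would use its injectivity (the fact that $U_0$ is an isometry) for (i), and its surjectivity (the coisometry identity $UU^\ast=I$) for (ii). The auxiliary fact that makes (ii) tractable is that, the weights being positive, $U$ is constant on diagonals by Lemma~\ref{LemNonnegativeWeightsUnitaryConstantOnDiagonals}, so that $U_{0,m}=U_{-m,0}$ for every $m\in\mathbb{Z}$. Throughout, the only property of $\{v_1,v_2\}$ and $\{w_1,w_2\}$ that enters is that they are orthonormal bases of $H=\mathbb{C}^2$; commutativity and the distinct-eigenvalue hypothesis are not needed for this particular lemma.

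For (i), fix $j\in\{1,2\}$. Since $U$ is unitary, $U_0$ is an isometry, hence $U_0v_j\neq 0$; equivalently $U_{i,0}v_j\neq 0$ for at least one $i\in\mathbb{Z}$. As $\{w_1,w_2\}$ spans $H$, for such an $i$ there is $k\in\{1,2\}$ with $\langle U_{i,0}v_j,w_k\rangle\neq 0$, which is precisely the assertion with $i_j=i$ and $k_j=k$.

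For (ii), fix $k\in\{1,2\}$ and argue by contradiction, assuming $\langle U_{i,0}v_j,w_k\rangle=0$ for all $i\in\mathbb{Z}$ and $j\in\{1,2\}$; since $\{v_1,v_2\}$ is a basis this says $U_{i,0}^\ast w_k=0$ for every $i\in\mathbb{Z}$. Here I would invoke $UU^\ast=I$ and read off its $(0,0)$ block using \eqref{FormAdjointOfMatrix} and \eqref{FormProductOfMatrices}, obtaining $\sum_{m\in\mathbb{Z}}U_{0,m}U_{0,m}^\ast=I$. Constant-on-diagonals gives $U_{0,m}=U_{-m,0}$, so $\sum_{m\in\mathbb{Z}}U_{m,0}U_{m,0}^\ast=I$; pairing this with $w_k$ yields $\sum_{m\in\mathbb{Z}}\lVert U_{m,0}^\ast w_k\rVert^2=1$, contradicting $U_{i,0}^\ast w_k=0$ for all $i$. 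Hence some $U_{m,0}^\ast w_k\neq 0$, and expanding it in $\{v_1,v_2\}$ produces a $j$ with $\langle U_{m,0}v_j,w_k\rangle\neq 0$ (take $i_k=m$, $j_k=j$).

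The only genuinely substantive point is part (ii): unlike (i), it is a surjectivity statement about the zeroth column and cannot be seen from $U_0$ alone. The step I expect to require the most care is the translation of the hypothesis into an identity involving $UU^\ast$; this is exactly where positivity of the weights enters, through the constant-on-diagonals property, which converts the $(0,0)$ diagonal block of $UU^\ast$ into a sum running over the entries $U_{m,0}$ of the very column under consideration. Everything else is routine bookkeeping with the matrix calculus \eqref{FormAdjointOfMatrix}--\eqref{FormProductOfMatrices}.
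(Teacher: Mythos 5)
Your proposal is correct and follows essentially the same route as the paper: part (i) from the isometry of $U_0$ plus expanding in the basis $\{w_1,w_2\}$, and part (ii) from the unitarity of $U^\ast$ (your $(0,0)$ block of $UU^\ast=I$ is just the paper's identity $1=\lVert U^\ast w_k^{(0)}\rVert^2=\sum_{i}\lVert U_{0,i}^\ast w_k\rVert^2$) combined with Lemma~\ref{LemNonnegativeWeightsUnitaryConstantOnDiagonals} to trade the zeroth row for the zeroth column via $U_{0,m}=U_{-m,0}$. The only cosmetic difference is that you argue (ii) by contradiction where the paper argues directly, and your observation that only positivity (through constant-on-diagonals) and the orthonormality of the bases are used is accurate.
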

\begin{proof}
    (i). For $ j\in \{1,2\} $ we have
    \begin{equation*}
        1 = \lVert Uv_{j}^{(0)}\rVert^{2} = \sum_{i\in \mathbb{Z}} \lVert U_{i,0}v_{j}\rVert^{2}.
    \end{equation*}
    This implies that there exists $ i_{j}\in \mathbb{Z} $ such that $ U_{i_{j},0}v_{j} \not=0 $. Since $ \{w_{1},w_{2}\} $ is the orthonormal basis of $ H $, there exists $ k_{j}\in \{1,2\} $ such that $ \langle U_{i_{j},0}v_{j},w_{k_{j}}\rangle \not=0 $. \\
    (ii). For $ k\in \{1,2\} $ we have
    \begin{align*}
        1 &= \lVert U^{\ast} w_{k}^{(0)}\rVert^{2} = \sum_{i\in \mathbb{Z}} \lVert (U^{\ast})_{i,0}w_{k}\rVert^{2} \\
        &= \sum_{i\in \mathbb{Z}} \lVert U_{0,i}^{\ast}w_{k}\rVert^{2} \stackrel{\text{Lemma }\ref{LemNonnegativeWeightsUnitaryConstantOnDiagonals}}{=} \sum_{i\in \mathbb{Z}} \lVert U_{-i,0}^{\ast}w_{k}\rVert^{2}.
    \end{align*}
    Arguing as in (i) we find $ i_{k}\in \mathbb{Z} $ and $ j_{k}\in \{1,2\} $ such that
    \begin{equation*}
        \langle v_{j_{k}}, U_{i_{k},0}^{\ast}w_{k}\rangle = \langle U_{i_{k},0}v_{j_{k}}, w_{k}\rangle \not=0,
    \end{equation*}
    which proves (ii).
\end{proof}
\begin{lemma}
    \label{Lem2DimBijection}
    Under the assumptions of Lemma \ref{LemZeroColumnEigenvectorsMapping} there exist a bijection $ \sigma\!: \{1,2\}\to \{1,2\} $ and a function $ \tau\!: \{1,2\}\to \mathbb{Z} $ such that
    \begin{equation*}
        \langle U_{\tau(j),0}v_{j}, w_{\sigma(j)}\rangle \not=0, \qquad j\in \{1,2\}.
    \end{equation*}
\end{lemma}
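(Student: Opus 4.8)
The plan is to reinterpret the two parts of Lemma \ref{LemZeroColumnEigenvectorsMapping} as the statement that a certain bipartite graph on $ \{1,2\} $ has no isolated vertex, and then to extract a perfect matching from it. Concretely, I would declare $ j\sim k $ (for $ j,k\in\{1,2\} $) whenever there exists $ i\in\mathbb{Z} $ with $ \langle U_{i,0}v_{j}, w_{k}\rangle \not=0 $. In this language, Lemma \ref{LemZeroColumnEigenvectorsMapping}.(i) asserts that every $ j $ is related to at least one $ k $, while Lemma \ref{LemZeroColumnEigenvectorsMapping}.(ii) asserts that every $ k $ is related to at least one $ j $; thus the bipartite graph whose edges are given by $ \sim $ has minimum degree at least one on each side.

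First I would produce the bijection $ \sigma $ as a perfect matching of this graph. Since each side has only two vertices, a perfect matching exists by Hall's marriage theorem: the condition for the singleton subsets of the left side is exactly part (i), and the condition for the whole left side---that its neighbourhood is all of the right side---is exactly part (ii). For a self-contained argument one can instead proceed by a short case analysis: if $ 1\sim 1 $, then either $ 2\sim 2 $, so that $ \sigma=\mathrm{id} $ matches both edges, or $ 2\not\sim 2 $, in which case (i) gives $ 2\sim 1 $ and (ii) applied to $ k=2 $ forces $ 1\sim 2 $, so the transposition matches both edges; the complementary case $ 1\not\sim 1 $ is handled analogously, yielding $ 1\sim 2 $ together with $ 2\sim 1 $ and again the transposition. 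In every case one obtains a bijection $ \sigma\!:\{1,2\}\to\{1,2\} $ with $ j\sim\sigma(j) $ for each $ j $.

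Finally, I would define $ \tau $ by choosing witnesses: for each $ j\in\{1,2\} $ the relation $ j\sim\sigma(j) $ means there is some $ i\in\mathbb{Z} $ with $ \langle U_{i,0}v_{j}, w_{\sigma(j)}\rangle\not=0 $, and I set $ \tau(j) $ to be any such $ i $. This yields precisely the asserted conclusion. I do not expect a genuine obstacle here; the only point worth flagging is that both halves of Lemma \ref{LemZeroColumnEigenvectorsMapping} are genuinely needed---part (i) alone would permit $ v_{1} $ and $ v_{2} $ to pair only with $ w_{1} $, admitting no bijection, and it is part (ii) that rules this out by ensuring that $ w_{2} $ is reached as well.
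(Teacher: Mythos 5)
Your argument is correct and is essentially the paper's own: the paper defines $B_{j} = \{k \in \{1,2\}\!: \langle U_{i,0}v_{j},w_{k}\rangle \not=0 \text{ for some } i\in \mathbb{Z}\}$, which is precisely the neighbourhood of $j$ in your bipartite graph, notes that Lemma \ref{LemZeroColumnEigenvectorsMapping}.(i) makes each $B_{j}$ nonempty while (ii) rules out $B_{1}=B_{2}=\{j\}$, and then verifies by a table of the seven possible pairs $(B_{1},B_{2})$ that a bijection $\sigma$ with $\sigma(j)\in B_{j}$ exists --- the same exhaustive check you perform (or subsume under Hall's theorem for the $2\times 2$ case), with $\tau$ then chosen as a witness exactly as you do.
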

\begin{proof}
    For $ j\in \{1,2\} $ set
    \begin{equation*}
        B_{j} = \left\{ k\in \{1,2\}\!: \langle U_{i,0}v_{j},w_{k}\rangle \not= 0 \text{ for some } i\in \mathbb{Z} \right\}.
    \end{equation*}
    By Lemma \ref{LemZeroColumnEigenvectorsMapping}.(i), $ B_{j}\not=\varnothing $ for $ j\in \{1,2\} $. From Lemma \ref{LemZeroColumnEigenvectorsMapping}.(ii) we deduce that it is not possible that $ B_{1} = B_{2} = \{j\} $ for any $ j\in \{1,2\} $. The only possibilities for the sets $ B_{1}, B_{2} $ are listed in the following table:
    \begin{center}
        \begin{tabular}{c|c}
            $ B_{1} $ & $ B_{2} $\\
            \hline
            $ \{ 1\} $ & $ \{2 \} $ \\
            $ \{1 \} $ & $ \{1,2 \} $ \\   
            $ \{ 2\} $ & $ \{1 \} $ \\ 
            $ \{2 \} $ & $ \{ 1,2\} $ \\   
            $ \{1,2 \} $ & $ \{ 1\} $ \\   
            $ \{ 1,2\} $ & $ \{2 \} $ \\   
            $ \{ 1,2\} $ & $ \{ 1,2\} $ \\   
        \end{tabular}
    \end{center}
    It is a matter of routine to verify that in every case there exists a bijection $ \sigma\!: \{1,2\}\to \{1,2\} $ such that $ \sigma(j)\in B_{j} $ for $ j\in \{1,2\} $. The existence of the required function $ \tau\!: \{1,2\}\to \mathbb{Z} $ easily follows.
\end{proof}
Now we are in the position to prove Theorem \ref{Thm2DimUnitaryEquivalenceAtMostTwoNonZeroDiagonals}.
\begin{proof}[Proof of Theorem \ref{Thm2DimUnitaryEquivalenceAtMostTwoNonZeroDiagonals}]
    It is enough to prove that (i) implies (ii). Let $ \sigma\!: \{1,2\}\to \{1,2\} $ and $ \tau\!: \{1,2\}\to \mathbb{Z} $ be as in Lemma \ref{Lem2DimBijection}. For $ k\in \mathbb{Z} $ let $ \sigma(S_{k}) = \{\lambda_{k,1},\lambda_{k,2}\} $. By Lemma \ref{LemInterweavingWeightedShifts},
    \begin{equation}
        \label{ProofFormInterweavingForEigenvectors}
        \lambda_{k,j}U_{\tau(j),0}v_{j}=U_{\tau(j),0}S_{k}v_{j} = T_{\tau(j)+k}U_{\tau(j),0}v_{j}, \qquad j\in\{1,2\}, k\in\mathbb{Z}.
    \end{equation}
    Since $ \langle U_{\tau(j),0}v_{j},w_{\sigma(j)}\rangle\not=0 $, $ U_{\tau(j),0}v_{j} $ is an eigenvector of $ T_{\tau(j)+k} $ for every $ k\in \mathbb{Z} $. By the assumption, this implies that for $ j\in \{1,2\} $ there exists $ \mu_{j} \in \mathbb{C}\setminus\{0\} $ satisfying $ U_{\tau(j),0}v_{j} = \mu_{j}w_{\sigma(j)} $; since $ U $ is unitary, we have $ \lvert \mu_{j}\rvert \le 1 $. For $ i\in \tau(\{1,2\}) $ define $ V_{i,0}\in \mathbf{B}(H) $ as follows:
    \begin{equation*}
        V_{i,0} x = \sum_{\substack{\ell\in \{1,2\}\\ i = \tau(\ell)}}\langle x,v_{\ell}\rangle w_{\sigma(\ell)}, \qquad x\in H;
    \end{equation*}
    for $ i\notin \tau(\{1,2\}) $ we set $ V_{i,0} = 0 $. It can be easily seen that $ V_{i,0} $ is a partial isometry for every $ i\in \mathbb{Z} $ and that
    \begin{equation*}
        V_{\tau(j),0}^{\ast} x = \sum_{\substack{\ell\in \{1,2\}\\ \tau(j) = \tau(\ell)}}\langle x,w_{\sigma(\ell)}\rangle v_{\ell}, \qquad x\in H, \ j\in\{1,2\}.
    \end{equation*}
    From the above we deduce that for every $ j\in\{1,2\} $, $ V_{\tau(j),0}^{\ast}V_{\tau(j),0} $ is the orthogonal projection of $ H $ onto the space $ \lin\{ v_{\ell}\!: \tau(j) = \tau(\ell) \} $ and that $ V_{\tau(j),0}V_{\tau(j),0}^{\ast} $ is the orthogonal projection of $ H $ onto the space $ \lin\{ w_{\ell}\!: \tau(j) = \tau(\ell) \} $; in particular, if $ \tau(1) = \tau(2) $, then $ V_{\tau(1),0} $ is unitary.
    Define $ V_{0}\in \mathbf{B}(H,\ell^{2}(\mathbb{Z},H)) $ by the formula
    \begin{equation*}
        V_{0}x = (V_{i,0}x)_{i\in \mathbb{Z}}, \quad x\in H.
    \end{equation*}
    If $ x\in H $, then
    \begin{equation*}
        \lVert V_{0}x\rVert^{2} = \sum_{i\in \mathbb{Z}} \lVert V_{i,0}x\rVert^{2} = \sum_{j=1}^{2}\lvert \langle x,v_{j}\rangle\rvert^{2} = \lVert x\rVert^{2}.
    \end{equation*}
    Thus, $ V_{0} $ is the isometry. We will show that $ V_{0} $ satisfies the conditions in Corollary \ref{CorUnitaryEquivalenceShiftsWithDoublyCommutingWeights}. First, we will prove \eqref{FormPositiveWeightsDoublyCommuting} and \eqref{FormNegativeWeightsDoublyCommuting}. For every $ j\in \{1,2\} $ and $ k\in \mathbb{Z} $ we have
    \begin{align*}
        \lambda_{k,j}\mu_{j}w_{\sigma(j)} &= \lambda_{k,j}U_{\tau(j),0}v_{j} = U_{\tau(j),0}S_{k}v_{j} \\
        &\stackrel{\eqref{ProofFormInterweavingForEigenvectors}}{=} T_{\tau(j)+k} U_{\tau(j),0}v_{j} = \mu_{j}T_{\tau(j)+k}w_{\sigma(j)},
    \end{align*}
    which implies that $ \lambda_{k,j}w_{\sigma(j)} = T_{\tau(j)+k}w_{\sigma(j)} $. By the definition of $ V_{\tau(j)} $, this equality takes the form $ V_{\tau(j),0}S_{k}v_{j} = T_{\tau(j)+k}V_{\tau(j),0}v_{j} $ for $ j\in \{1,2\} $ and $ k\in \mathbb{Z} $. If $ j,\ell\in \{1,2\} $ and $ \tau(j)\not=\tau(\ell) $, then for every $ k\in \mathbb{Z} $ we have $ V_{\tau(j),0}S_{k}v_{\ell} = 0 = T_{\tau(j)+k}V_{\tau(j),0}v_{\ell} $. In turn, if $ i\notin \tau(\{1,2\}) $, then $ V_{i,0} = 0 $. Therefore, $ V_{0}S_{k} = D[(T_{i+k})_{i\in \mathbb{Z}}]V_{0} $ for every $ k\in \mathbb{Z} $. From this we obtain that \eqref{FormPositiveWeightsDoublyCommuting} and \eqref{FormNegativeWeightsDoublyCommuting} hold. Next, we will show \eqref{FormWanderingProperty}. If $ k,m\in \mathbb{N}_{1} $, $ k\not=m $, then, by Lemma \ref{LemBasicPropertiesOfOperatorShifts}, for $ j_{1},j_{2}\in \{1,2\} $,
    \begin{align}
        \label{ProofFormWanderingPropertyForPositiveOnBasis}
        &\langle T^{[k]}U_{0}v_{j_{1}},T^{[m]}U_{0}v_{j_{2}}\rangle = \langle T^{[k]}(U_{\tau(j_{1})}v_{j_{1}})^{\tau(j_{1})}, T^{[m]}(U_{\tau(j_{2})}v_{j_{2}})^{\tau(j_{2})}\rangle \\
        \notag&= \langle (T_{\tau(j_{1})+k}\cdots T_{\tau(j_{1})+1}w_{\sigma(j_{1})})^{\tau(j_{1})+k}, (T_{\tau(j_{2})+m}\cdots T_{\tau(j_{2})+1}w_{\sigma(j_{2})})^{\tau(j_{2})+m}\rangle.
    \end{align}
    If $ \tau(j_{1})+k\not=\tau(j_{2})+m $, then the right hand side of \eqref{ProofFormWanderingPropertyForPositiveOnBasis} is equal to zero. If $ \tau(j_{1})+k = \tau(j_{2})+m $, then $ \tau(j_{1})\not=\tau(j_{2}) $, because $ k\not=m $. This implies that $ j_{1}\not = j_{2} $ and $ \sigma(j_{1}) \not= \sigma(j_{2}) $, which implies that $ \langle w_{\sigma(j_{1})},w_{\sigma(j_{2})}\rangle = 0 $. From the fact that $ w_{\sigma(j)} $ is the eigenvector of $ T_{k} $ for every $ j\in \{1,2\} $ and $ k\in \mathbb{Z} $ we deduce that the right hand side of \eqref{ProofFormWanderingPropertyForPositiveOnBasis} is equal to zero also in this case. Since $ \{v_{1},v_{2}\} $ is the orthonormal basis of $ H $, it follows that
    \begin{equation*}
        \langle T^{[k]}U_{0}x,T^{[m]}U_{0}y\rangle = 0, \qquad x,y\in H, k,m\in \mathbb{N}_{1}, k\not=m.
    \end{equation*}
    Similar argument can be used to prove that the above equality holds also when $ k,m\in \mathbb{Z} $ are both non-positive or are of different sign. Therefore, \eqref{FormWanderingProperty} is satisfied. Finally, we will prove \eqref{FormClosedSpan}. Note that it is enough to show that
    \begin{equation*}
        x^{(n)} \in \bigvee_{k\in \mathbb{Z}} T^{[k]}\mathcal{R}(U_{0}), \qquad x\in H, \ n\in \mathbb{Z}.
    \end{equation*}
    Observe that if $ i-\tau(j) < 0 $, then
    \begin{align*}
        \notag T^{[i-\tau(j)]}(V_{\tau(j),0}v_{j})^{\tau(j)} &= T^{\ast (\tau(j)-i)}(V_{\tau(j),0}v_{j})^{(\tau(j))}\\
        \notag&\stackrel{\text{Lemma } \ref{LemBasicPropertiesOfOperatorShifts}}{=} (T_{i+1}\cdots T_{\tau(j)}w_{\sigma(j)})^{(i)}\\
        &\stackrel{\eqref{ProofFormInterweavingForEigenvectors}}{=} (\lambda_{i-\tau(j)+1,j}\cdots \lambda_{0,j}w_{\sigma(j)})^{(i)},
    \end{align*}
    so $ \lVert T^{[i-\tau(j)]}V_{0}v_{j}\rVert = \lambda_{i-\tau(j)+1,j}\cdots \lambda_{0,j} $. Similarly, if $ i-\tau(j)>0 $, then
    \begin{align*}
        \notag T^{[i-\tau(j)]}(V_{\tau(j),0}v_{j})^{(\tau(j))} &= (T_{i}\cdots T_{\tau(j)+1}w_{\sigma(j)})^{(i)} \\
        &= (\lambda_{i-\tau(j),j}\cdots \lambda_{1,j}w_{\sigma(j)})^{(i)},
    \end{align*}
    so $ \lVert T^{[i-\tau(j)]}V_{0}v_{j}\rVert = \lambda_{i-\tau(j),j}\cdots \lambda_{1,j} $. Therefore, if $ x\in H $ and $ n\in \mathbb{Z} $, then
    \begin{equation*}
        x^{(n)} = \sum_{j=1}^{2}\frac{1}{\lVert T^{[n-\tau(j)]}V_{0}v_{j}\rVert}\langle x,w_{\sigma(j)}\rangle T^{[n-\tau(j)]}V_{0}v_{j} \in \bigvee_{k\in \mathbb{Z}} T^{[k]}\mathcal{R}(U_{0}).
    \end{equation*}
    Thus, we get \eqref{FormClosedSpan}. The application of Corollary \ref{CorUnitaryEquivalenceShiftsWithDoublyCommutingWeights}.(ii) and Remark \ref{RemFormulaForUnitaryOperator} gives us (ii).
\end{proof}
In \cite[Example 3.1]{Kos1} the author presented an example of two unitarily equivalent bilateral shifts with operator weights defined on $ \mathbb{C}^{2} $, for which the unitary equivalence cannot be given by a unitary operator of diagonal form. Below we present a general construction of two unitarily equivalent shifts with weights defined on $ \mathbb{C}^{k} $ such that the every unitary operator making them unitarily equivalent has at least $ k $ non-zero diagonals.
\begin{example}
    Assume $ k\in \mathbb{N}_{2} $. For every $ n\in \mathbb{Z} $ let $ (x_{n,i})_{i=1}^{k} \subset (0,\infty) $ be the sequence of positive numbers such that $ x_{k,i}\not= x_{\ell,j} $ for $ (k,i)\not=(\ell,j) $ and that
    \begin{equation}
        \label{ExampleFormUniformBoundedness}
        \sup_{n\in \mathbb{Z}}\max_{i=1,\ldots,k} x_{n,i} < \infty
    \end{equation} 
    Define
    \begin{equation*}
        S_{n} = \begin{bmatrix}
            x_{n,1} & 0 & \ldots & 0\\
            0 & x_{n,2} & \ldots & 0\\
            \vdots & \ddots & \ddots & \vdots\\
            0 & 0 & \ldots & x_{n,k}
        \end{bmatrix}, \qquad n\in \mathbb{Z},
    \end{equation*}
    and
    \begin{equation*}
        T_{n} = \begin{bmatrix}
            x_{n-1,1} & 0 & \ldots & 0\\
            0 & x_{n-2,2} & \ldots & 0\\
            \vdots & \ddots & \ddots & \vdots\\
            0 & 0 & \ldots & x_{n-k,k}
        \end{bmatrix}, \qquad n\in \mathbb{Z}.
    \end{equation*}
    Obviously, both $ S_{n} $ and $ T_{n} $ are positive for every $ n\in \mathbb{Z} $. By \eqref{ExampleFormUniformBoundedness}, the sequences $ (S_{n})_{n\in \mathbb{Z}}, (T_{n})_{n\in \mathbb{Z}}\subset \mathbf{B}(\mathbb{C}^{k}) $ are uniformly bounded. Let $ S, T\in \mathbf{B}(\ell^{2}(\mathbb{Z},\mathbb{C}^{k})) $ be the bilateral weighted shifts with weights $ (S_{n})_{n\in \mathbb{Z}} $ and $ (T_{n})_{n\in \mathbb{Z}} $, respectively. We will show that $ S $ and $ T $ are unitarily equivalent by a unitary operator with at least $ k $ non-zero diagonals. For $ i = 1,\ldots,k $ let $ U_{i,0} $ be the orthogonal projection of $ \mathbb{C}^{k} $ onto $ \lin\{e_{i}\} $, where $ (e_{j})_{j=1}^{k}\subset \mathbb{C}^{k} $ stands for the standard orthonormal basis of $ \mathbb{C}^{k} $; for $ i\in \mathbb{Z}\setminus\{1,\ldots,k\} $ set $ U_{i,0} = 0 $. Define $ U_{0}\in \mathbf{B}(\mathbb{C}^{k},\ell^{2}(\mathbb{Z},\mathbb{C}^{k})) $ by the formula
    \begin{equation*}
        U_{0}x = (U_{i,0}x)_{i\in \mathbb{Z}}, \qquad x\in \mathbb{C}^{k}.
    \end{equation*}
    We will check that $ U_{0} $ satisfies Corollary \ref{CorUnitaryEquivalenceShiftsWithDoublyCommutingWeights}.(ii). Obviously, $ U_{0} $ is an isometry. We will show that \eqref{FormPositiveWeightsDoublyCommuting} and \eqref{FormNegativeWeightsDoublyCommuting} hold. Since $ S_{n} $ and $ T_{n} $ are positive for $ n\in \mathbb{Z} $ it is enough to verify that
    \begin{equation}
        \label{ExampleFormWeightsPositiveInterweaving}
        U_{0}S_{n} = D[(T_{i+n})_{i\in \mathbb{Z}}]U_{0}, \qquad n\in \mathbb{Z}.
    \end{equation}
    For every $ n\in \mathbb{Z} $ and $ j = 1,\ldots,k $ we have
    \begin{equation*}
        U_{0}S_{n}e_{j} = U_{0}x_{n,j}e_{j} = (x_{n,j}e_{j})^{(j)}
    \end{equation*}
    and
    \begin{equation*}
        D[(T_{i+n})_{i\in \mathbb{Z}}]U_{0}e_{j} = D[(T_{i+n})_{i\in \mathbb{Z}}]e_{j}^{(j)} =  (T_{j+n}e_{j})^{(j)} = (x_{n,j}e_{j})^{(j)}.
    \end{equation*}
    Hence, \eqref{ExampleFormWeightsPositiveInterweaving} holds. Next, we will check \eqref{FormWanderingProperty}. Suppose $ n,m\in \mathbb{N}_{1} $ and $ n\not=m $. We will show that
    \begin{equation}
        \label{ExampleFormWanderingProperty}
        \langle T^{[n]}U_{0}x,T^{[m]}U_{0}y\rangle = 0, \qquad x,y\in \mathbb{C}^{k}.
    \end{equation}
    It is enough to verify \eqref{ExampleFormWanderingProperty} for $ x = e_{j_{n}} $, $ y = e_{j_{m}} $, where $ j_{n},j_{m}\in \{1,\ldots,k\} $. By Lemma \ref{LemBasicPropertiesOfOperatorShifts}, for every $ \ell\in \mathbb{N}_{1} $ and $ j = 1,\ldots,k $,
    \begin{equation*}
        T^{\ell}U_{0}e_{j} = T^{\ell}e_{j}^{(j)} = (T_{j+n}\cdots T_{j+1}e_{j})^{j+\ell} = (x_{\ell,j}\cdots x_{1,j}e_{j})^{j+\ell}.
    \end{equation*}
    Hence, $ \langle T^{n}U_{0}e_{j_{n}},T^{m}U_{0}e_{j_{m}}\rangle = 0 $ if $ j_{n}+n\not= j_{m}+m $. If $ j_{n}+n = j_{m}+m $, then $ j_{n}\not=j_{m} $, because $ n\not=m $. Thus, $ \langle T^{n}U_{0}e_{j_{n}},T^{m}U_{0}e_{j_{m}}\rangle = 0 $ also when $ j_{n}+n= j_{m}+m $. Similarly, we can show that \eqref{ExampleFormWanderingProperty} holds for $ m,n $ being both non-positive or of different sign. It remains to verify \eqref{FormClosedSpan}. It is enough to check that
    \begin{equation*}
        x^{(n)} \in \bigvee_{\ell\in \mathbb{Z}} T^{[\ell]}\mathcal{R}(U_{0}), \qquad x\in \mathbb{C}^{k},n\in \mathbb{Z}.
    \end{equation*}
    It is a matter of routine to verify that for $ x\in \mathbb{C}^{k} $ and $ n\in \mathbb{Z} $,
    \begin{equation*}
        x^{(n)} = \sum_{j=1}^{k}\langle x,e_{j}\rangle e_{j} = \sum_{j=1}^{k}\langle x,e_{j}\rangle \frac{1}{\lVert T^{[n-j]}U_{0}e_{j}\rVert^{2}} T^{[n-j]}U_{0}e_{j} \in \bigvee_{\ell\in \mathbb{Z}} T^{[\ell]}\mathcal{R}(U_{0})
    \end{equation*}
    (see the proof of Theorem \ref{Thm2DimUnitaryEquivalenceAtMostTwoNonZeroDiagonals}).
    By Corollary \ref{CorUnitaryEquivalenceShiftsWithDoublyCommutingWeights} and Remark \ref{RemFormulaForUnitaryOperator}, $ S $ and $ T $ are unitarily equivalent by a unitary operator with $ k $ non-zero diagonals. Now we will show that there is no unitary operator with at most $ k-1 $ non-zero diagonals making $ S $ and $ T $ unitarily equivalent. Suppose to the contrary $ U $ is such an operator. By Corollary \ref{CorUnitaryEquivalenceShiftsWithDoublyCommutingWeights}.(i), $ U_{0}\in \mathbf{B}(\mathbb{C}^{k},\ell^{2}(\mathbb{Z},\mathbb{C}^{k})) $ has $ k-1 $ non-zero entries and satisfies \eqref{ExampleFormWeightsPositiveInterweaving}; suppose $ U_{n_{1},0},\ldots,U_{n_{\ell},0}\in \mathbf{B}(\mathbb{C}^{k}) $ are the only non-zero entries of $ U_{0} $, where $ \ell \le k-1 $, $ n_{j} \in \mathbb{Z} $ for $ j = 1,\ldots,\ell $ and $ n_{1}<\ldots < n_{\ell} $. By \eqref{ExampleFormWeightsPositiveInterweaving},
    \begin{equation*}
        x_{n,j}U_{n_{i},0}e_{j} = U_{n_{i},0}S_{n}e_{j} = T_{n_{i}+n}U_{n_{i},0}e_{j}, \qquad i = 1,\ldots,\ell,\ j = 1,\ldots, k.
    \end{equation*}
    Since, $ x_{n,j} $ is the eigenvalue only of the weight $ T_{n+j} $, it follows that
    \begin{equation*}
        U_{n_{i},0}e_{j}\not=0 \implies n_{i} = j, \qquad j= 1,\ldots,k, \ i= 1,\ldots,\ell.
    \end{equation*}
    From the fact that $ U_{0} $ is isometric, we deduce that for every $ j = 1,\ldots,k $ there exists $ n_{i} $ such that $ U_{n_{i},0}e_{j}\not=0 $. Hence, $ \{1,\ldots,k\} = \{n_{1},\ldots,n_{\ell}\} $, which is impossible (we asssumed $ \ell\le k-1 $).
\end{example}
	\bibliographystyle{plain}
	\bibliography{references}

	\noindent Michał Buchała\\  
	michal.buchala@im.uj.edu.pl\\

	\noindent {\small
	\noindent Doctoral School of Exact and Natural Sciences, Jagiellonian University\\
	Łojasiewicza~11\\
	PL-30348 Kraków, Poland\\
	}
	\noindent {\small
	\noindent Institute of Mathematics, Jagiellonian University\\
	Łojasiewicza~6\\
	PL-30348 Kraków, Poland\\
	}\bigskip
\end{document}